\theoremstyle{plain}
\newtheorem{theorem}{THEOREM}[section]
\newtheorem{proposition}{PROPOSITION}[section]
\newtheorem{lemma}{LEMMA}[section]
\theoremstyle{definition}
\newtheorem{definition}{DEFINITION}[section]
\newtheorem{remark}{Remark}[section]
\numberwithin{equation}{section}
\newcommand{\ga}{\alpha}
\newcommand{\gb}{\beta}
\newcommand{\gD}{\Delta}
\newcommand{\gth}{\vartheta}
\newcommand{\gTh}{\Theta}
\newcommand{\gl}{\lambda}
\newcommand{\gL}{\Lambda}
\newcommand{\gm}{\mu}
\newcommand{\gn}{\nu}
\newcommand{\gp}{\pi}
\newcommand{\gs}{\sigma}
\newcommand{\gf}{\varphi}
\newcommand{\gps}{\psi}
\newcommand{\gO}{\Omega}
\newcommand{\ov}{\overline}
\newcommand{\R}{\mathbb {R}}
\newcommand{\RN}{\R ^N}
\newcommand{\N}{\mathbb {N}}
\newcommand{\ldue}{L^2}
\newcommand{\huno}{H_0^1(\Omega \cup \Gamma _2)}
\newcommand{\noi}{\noindent}
\newcommand{\be}{\begin{equation}}
\newcommand{\ee}{\end{equation}}
\begin{document}

\title[Morse index and symmetry for mixed elliptic problems]
{Morse index and symmetry for elliptic problems with nonlinear mixed  boundary conditions}

\author[Damascelli]{Lucio Damascelli}
\address{ Dipartimento di Matematica, Universit\`a  di Roma
" Tor Vergata " - Via della Ricerca Scientifica 1 - 00173  Roma - Italy.}
\email{damascel@mat.uniroma2.it}
\author[Pacella]{Filomena Pacella}
\address{Dipartimento di Matematica, Universit\`a di Roma
" La Sapienza " -  P.le A. Moro 2 - 00185 Roma - Italy.}
\email{pacella@mat.uniroma1.it}
\date{}
\thanks{Supported by PRIN-2009-WRJ3W7 grant}
\subjclass [2010] {35B06,35B07,35B50,35J66,35P05}
\keywords{Mixed Elliptic Problems, Nonlinear Boundary conditions, Symmetry,
Maximum Principle,  Morse index}
\begin{abstract}
Under a Morse index condition we prove symmetry results for solutions of a nonlinear mixed boundary condition elliptic problem. As an intermediate step we relate the Morse index of  a solution to a mixed boundary condition linear eigenvalue problem for which we construct sequences of eigenvalues and provide variational characterization of them.
\end{abstract}

\maketitle

\section{Introduction}
We consider an elliptic problems with mixed nonlinear boundary conditions of the type
\be  \label{genprob} 
\begin{cases} - \gD u =f(x,u) \quad &\text{in } \gO \\
u= 0 \quad & \text{on } \Gamma _1\\
\frac {\partial u} {\partial \gn}= g(x,u) \quad & \text{on }  \Gamma _2 
\end{cases}
\ee
 where  $\Omega $ is a bounded domain 
 in $\R^N$, $N \geq 2$, $\gn $ stands for the outer normal, and  $\Gamma _1$, $\Gamma _2 $ are relatively open nonempty disjoint subset of the boundary $\partial \Omega$ such that
 
\be
\label{GeneralBoundary1}   
\Gamma _2  \text{ is a smooth } (N-1)- \text{submanifold}   \quad , \quad \Gamma _1= \partial \Omega \setminus \overline {\Gamma _2} 
\ee
and  
\be \label{GeneralBoundary2}   
\partial \Omega \setminus (\Gamma _1 \cup \Gamma _2)= \overline {\Gamma _1} \cap \overline {\Gamma _2}  \text{ is a smooth }   (N-2)-\text{submanifold}
\ee

\medskip
Moreover in all of the domains that we consider $\Gamma _1 $ is a smooth  $(N-1)$- submanifold, except possibly for a singular set $\Gamma ' \subset \Gamma _1$ which is discrete set or a smooth $(N-2)$-submanifold.
 \par
\bigskip
We will assume further that  $f=f(x,s):\overline { \Omega } \times \R  \to \R $,  $g=g(x,s): \overline {  \Gamma _2 } \times \R  \to \R $  are differentiable
  with respect to $s$ and
\be \label{Regolarita-f,g}
f, \frac {\partial f} {\partial s}, g, \frac {\partial g} {\partial s} \text{ are locally H\" older continuous functions in } \overline {\Omega } \times \R 
 \ee
\medskip
A solution of \eqref{genprob} will be understood in a weak sense. \par
Therefore 
we denote by $H^{1}_0 (\gO \cup \Gamma _2) $ the closure of $C_c^{\infty}(\gO \cup \Gamma _2 )$ in the space $H^{1} (\Omega )$ (which coincides with
the space of functions $u \in H^1 (\Omega )$ such that
 the trace of $u$ vanishes on $\Gamma _1$), 
  and say that 
$u $ is a $C^1$ bounded weak solution of the problem, if $u \in  H^{1}_0 (\gO \cup \Gamma _2)  \cap C^1 ( \Omega)\cap L^{\infty }(\Omega)$
 and  
\be 
\int _{\gO } \nabla u \cdot \nabla \gf \, dx = \int _{\gO }f(x,u)  \gf \, dx + \int _{\Gamma} g(x',u)  \gf \, dx' 
\; \; \forall \, \gf \in H^{1}_0 (\gO \cup \Gamma _2)
\ee
\par
\smallskip

The main aim of this paper is to prove cylindrical symmetry of solutions of \eqref{genprob}, both positive and sign changing, in some domains with cylindrical symmetry, by maximum principles and spectral properties of the linearized operator at the solution. \par
Denoting by $x=(x', x_N )$ a point $x=(x_1, \dots , x_{N-1}, x_N) \in \R^N$, the domains we consider will be subsets of the half space
$\R^N_{+}= \{ x=(x_1, \dots , x_N) \in \R^N:  x_N>0 \} 
$
defined in the following way.
\begin{definition}\label{DefDominiSimmetrici}
We say that a bounded domain $\Omega $ has  cylindrical symmetry if assuming that 
$$ \inf \{ t \in \R : (x',t) \in \Omega  \} =0 \; , \; 
    \sup \{ t \in \R : (x',t) \in \Omega  \} =b >0
$$
then
 for every $h \in (0,b)$ the set 
 $$\Omega ^h=\Omega  \cap \{ x_N=h \}
 $$
 is either a $N-1$-dimensional ball or a 
 $N-1$-dimensional annulus with the center on the $x_N$ axis, and
$$ \overline {\Omega ^0}= \overline {\Omega }  \cap \{ x_N=0 \}
 $$
 is also a nondegenerate closed ball or annulus in $\R^{N-1}$, whose nonempty interior in $\R^{N-1}$ we denote by 
 $\Omega ^0 $.\par
 \end{definition} 
\medskip
For such domains we will always assume that
\be \label{FrontieraDominiSimmetrici}
\Gamma _2= \Omega ^0 \quad ; \quad \Gamma _1= \partial \Omega \setminus \overline {\Gamma _2}
\ee
Thus $\Gamma _2 $ is a relatively open flat part of the boundary at the height $x_N=0$, which by our assumptions is either a $(N-1)$- dimensional ball or a 
$(N-1)$- dimensional annulus.\par

\smallskip
\textbf{Examples} of such domains are \\
a half ball
 $$(B_R^N)_+= B_R \cap \R^N_{+}=  \{ x=(x_1, \dots , x_N) \in \R^N: |x| < R \, ; \, x_N>0 \} \quad ,
$$ 
a half annulus
$$(A_{R_1, R_2}^N)_+= \{ x=(x_1, \dots , x_N) \in \R^N: R_1<|x| < R_2 \, ; \, x_N>0 \} \quad ,
$$
a cylinder 
 $$C_{R,b}=  \{ x=(x' , x_N) \in \R^N: |x'| < R \, ; \, 0<x_N<b \} \quad ,
 $$
an annular cylinder
 $$C_{R_1, R_2,b}=  \{ x=(x' , x_N) \in \R^N: R_1<|x'| < R \, ; \, 0<x_N<b \} \quad ,
 $$
a cone
$$ K_{R,b} = \{ x=(x' , x_N) \in \R^N: |x'| < \frac Rb (b-x_N) \, ; \, 0<x_N<b \} \quad .
$$
\par
Note that $\Gamma _1 $ is smooth in these examples, with the exceptions of the  cone at the vertex, and the cylinders  at height $b$. \par
\medskip

\medskip

The symmetry we will get for solutions of \eqref{genprob}  in cylindrical domains in $\R^N$, $N \geq 3$, is a variant of the axial symmetry known as \emph{foliated Schwarz symmetry} considered in several previous papers in connection with Dirichlet problems (see  \cite{BaWi},  \cite{DaPa}, \cite{DaGlPa},  \cite{GlPaWe},  \cite{Pa},  \cite{PaRa} , \cite{PaWe}, \cite{SmWi} and the references therein), whose definition we recall in Section 4. \par
 We will call it sectional foliated Schwarz symmetry. Since it is meaningful for $N \geq 3 $, we will not consider the case $N=2$.

 \begin{definition}\label{sectionfoliatedSS}
 Let $\Omega $ be a bounded  domain with cylindrical symmetry in $\R^N$, $N \geq 3$, and let $u: \Omega \to \R $ a continuous function.
 We say that $u$ is \emph{sectionally foliated Schwarz symmetric} if there exists a vector $p'= (p_1, \dots , p_{N-1},0) \in \R^N$,  $|p'=1|$, such that 
 $u(x)=u(x', x_N )$ depends only on $x_N$, $r= |x'|$ and $\gth = \arccos (\frac {x'}{|x'|} \cdot {p'})$ and $u$ is nonincreasing in $\gth$.
 \end{definition}
 \smallskip
 The definition just means that the functions $x' \mapsto u(x', h)$ are either radial for any $h \in (0,b)$, or nonradial but foliated Schwarz symmetric for any 
 $h \in (0,b)$ in the corresponding domain $\Omega ^h=\Omega  \cap \{ x_N=h \}$, with the same axis of symmetry. \par
 The symmetry result we prove is the following.
 
 \begin{theorem} \label{MainTheorem}
 Let $\Omega $ be a bounded domain in $\R^N$, $N \geq 3$, which is cylindrically symmetric
 as in Definition \ref{DefDominiSimmetrici}, and $\Gamma _2 $ and $\Gamma _1$ described as in \eqref{FrontieraDominiSimmetrici}. \par
 Let $u \in H^{1}_0 (\gO \cup \Gamma _2) \cap C^1 (\ov \Omega) $ be a weak solution of \eqref{genprob}, where $f$ and $g$ satisfy \eqref{Regolarita-f,g} and have the form 
  $f(x,s)=f(|x'|,x_N,s)$,
 $g(x',s)=g(|x'|,s)$ (i.e. they depend on $x'$ through the modulus $|x'|$). \par
 Assume further that $f$ and $g$ are strictly convex in the $s$- variable and that $u$ has Morse index $\gm (u) \leq N-1$.\par
 The $u$ is sectionally foliated Schwarz symmetric.
 \end{theorem}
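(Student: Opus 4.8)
The plan is to use the invariance of problem \eqref{genprob} under reflections across hyperplanes containing the $x_N$-axis, combined with the spectral theory of the linearized operator developed in the previous sections. For a unit vector $e\in\R^{N-1}$ — identified with $(e,0)\in\R^N$ — denote by $T_e x=(x'-2(x'\cdot e)e,x_N)$ the reflection across $H_e=\{x\in\R^N:x'\cdot e=0\}$ and set $\gO_e=\{x\in\gO:x'\cdot e>0\}$. Since $\gO$ is cylindrically symmetric and $f,g$ depend on $x'$ only through $|x'|$, the map $T_e$ leaves $\gO$, $\Gamma_1$ and $\Gamma_2$ invariant, and $u_e:=u\circ T_e$ is again a weak solution of \eqref{genprob}; put $w_e=u-u_e$. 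One checks that $w_e$ is a weak solution in $\gO_e$ of the linear mixed problem $-\gD w_e=c_e(x)\,w_e$ in $\gO_e$, $\frac{\de w_e}{\de\gn}=d_e(x')\,w_e$ on $\Gamma_2\cap\gO_e$, and $w_e=0$ on $(H_e\cap\gO)\cup(\Gamma_1\cap\de\gO_e)$, where $c_e(x)=\int_0^1\frac{\de f}{\de s}(x,tu+(1-t)u_e)\,dt$ and $d_e(x')$ is the corresponding difference quotient of $g$; both are locally H\"older continuous by \eqref{Regolarita-f,g}.

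The strict convexity of $f$ and $g$ in $s$ enters as follows. Let $\gps_e\in\huno$ be the $T_e$-antisymmetric function that equals $w_e^+$ on $\gO_e$; it lies in $\huno$ because $w_e$ vanishes on $H_e$ and on $\Gamma_1$. Testing the equation for $w_e$ with $w_e^+$ and integrating by parts gives $\int_{\gO_e}|\nabla w_e^+|^2=\int_{\gO_e}c_e(w_e^+)^2+\int_{\Gamma_2\cap\gO_e}d_e(w_e^+)^2$; since at any point where $w_e>0$ strict convexity forces $c_e(x)<\frac{\de f}{\de s}(x,u(x))$ and $d_e(x')<\frac{\de g}{\de s}(x',u(x'))$, the quadratic form of the linearized operator,
\be
Q(\gf)=\int_\gO\Big(|\nabla\gf|^2-\tfrac{\de f}{\de s}(x,u)\gf^2\Big)\,dx-\int_{\Gamma_2}\tfrac{\de g}{\de s}(x',u)\gf^2\,dx',
\ee
satisfies $Q(\gps_e)\le0$, with equality if and only if $w_e\le0$ in $\gO_e$. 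Applying this to $-e$ (recall $T_{-e}=T_e$, $w_{-e}=w_e$, $\gO_{-e}=\gO\setminus\ov{\gO_e}$) gives $Q(\gps_{-e})\le0$ with equality iff $w_e\ge0$ in $\gO_e$. Hence: if $u$ is not symmetric with respect to $H_e$ then at least one of $\gps_{\pm e}$ is a strictly negative direction for $Q$; and if $w_e$ changes sign in $\gO_e$ then $\gps_e$ and $\gps_{-e}$ have essentially disjoint supports, so they span a two-dimensional subspace on which $Q$ is negative definite.

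It remains to translate the bound $\gm(u)\le N-1$ — which, by the eigenvalue analysis of the preceding sections, means precisely that there is no $N$-dimensional subspace of $\huno$ on which $Q$ is negative definite — into symmetry. First, $u$ must be symmetric with respect to some $H_{e^*}$: otherwise, if moreover every $w_e$ were one-signed in $\gO_e$, the (closed, by continuity of $e\mapsto w_e$) set $\{e\in S^{N-2}:w_e\ge0\text{ in }\gO_e\}$ would be clopen, nonempty and distinct from $S^{N-2}$, contradicting connectedness ($N-2\ge1$); and if some $w_{e_1}$ changed sign one would, completing $e_1$ to an orthonormal basis of $\R^{N-1}$ and combining the negative directions furnished by the $\gps_{\pm e_j}$, produce an $N$-dimensional negative subspace for $Q$, again a contradiction. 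The genuinely delicate point — and the main obstacle — is this last construction: the functions $\gps_{e_j}$ for different $j$ need not have disjoint supports, so their linear independence and the negative definiteness of $Q$ on their span cannot be had for free; this is handled by a continuity/degree argument on the sphere, choosing the directions adaptively, exploiting that the set of symmetry hyperplanes is closed and that off this set the inequalities of the previous step are strict, and it is precisely here that the variational characterization of the eigenvalues built in the earlier sections is used to carry out the bookkeeping.

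Once it is known that $u$ is symmetric with respect to some $H_{e^*}$ and that no $w_e$ changes sign, one bootstraps: the linearized operator now commutes with $T_{e^*}$, so $\huno$ splits into $T_{e^*}$-symmetric and antisymmetric parts along which the Morse index adds up, and repeating the scheme among the reflections that preserve the symmetric part enlarges the symmetry group of $u$ step by step, each step consuming part of the index budget, until $u$ is invariant under a copy of $O(N-2)$ fixing some direction $p'\in S^{N-2}$. The surviving information that no $w_e$ changes sign then forces $u(x)$ to depend only on $x_N$, $|x'|$ and $\gth=\arccos(\tfrac{x'}{|x'|}\cdot p')$ and to be nonincreasing in $\gth$; that is, $u$ is sectionally foliated Schwarz symmetric.
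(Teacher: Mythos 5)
You have the right raw materials — reflection across axial hyperplanes, antisymmetric extensions $\gps_{\pm e}$ tested against the quadratic form, and the strict convexity of $f,g$ turning comparison into a spectral statement — but the proposal has a genuine gap exactly where you say "the genuinely delicate point is handled by a continuity/degree argument." That step is not a detail: it \emph{is} the heart of the theorem. The paper fills it by a concrete construction (Lemma~\ref{esistcappaconautovalpositivo}): from the first eigenfunctions $\gf_1^{e},\gf_1^{-e}$ of the cap problems \eqref{AutovalCappe} it builds a single function $\gps^{\bold e}$, antisymmetric in $e$ and normalized so that $(\gps^{\bold e},w_1)_{\bold V}=0$, and then applies the Borsuk--Ulam theorem to the odd map $\bold e\mapsto((\gps^{\bold e},w_2)_{\bold V},\dots,(\gps^{\bold e},w_j)_{\bold V})\in\R^{j-1}$, $j=\gm(u)\le N-1$; since $j-1<N-1=\dim S^{N-2}$, this map vanishes somewhere, yielding a direction with $\gl_1^{\bold e}\ge 0$. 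Your attempt to instead produce an $N$-dimensional negative subspace by combining $\gps_{\pm e_j}$ over an orthonormal frame fails for the reason you yourself point out (overlapping supports, no control on cross-terms), and the connectedness dichotomy on $\{e:w_e\ge0\}$, $\{e:w_e\le0\}$ only helps after you know every $w_e$ is one-signed, which is what you are trying to prove.

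The closing "bootstrap" to an $O(N-2)$ symmetry group is neither carried out nor needed, and is not how the paper concludes. Once a direction $\bold e$ with $\gl_1^{\bold e}\ge0$ is found, convexity gives $u\le u_{\gs(\bold e)}$ in $\gO(\bold e)$ (your computation, used here for a single distinguished $e$), and then the paper splits into two cases: if the inequality is strict, a rotating-plane argument in the angular variable produces a hyperplane of exact symmetry together with the one-sided comparison for \emph{every} axial direction; if instead $u\equiv u_{\gs(\bold e)}$, the angular derivative $\de u/\de\gth$ solves the linearized cap problem \eqref{autovalorezero} with eigenvalue $0$, so by $\gl_1^{\bold e}\ge0$ it is either identically zero or the one-signed first eigenfunction. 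Either way the hypothesis of Lemma~\ref{condsuffFSS} holds, which is precisely "for every $e\in S^{N-2}$, $u\le u_{\gs(e)}$ or $u\ge u_{\gs(e)}$ in $\gO(e)$," and this already gives sectional foliated Schwarz symmetry — no symmetry-group enlargement is required. I would encourage you to revise along these lines: keep the convexity computation, replace the hand-waved degree argument with the Borsuk--Ulam construction on cap eigenfunctions, and replace the bootstrap with the rotating-plane/angular-derivative dichotomy feeding into the sufficient condition of Lemma~\ref{condsuffFSS}.
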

 
 \begin{remark}
 An analogous result holds for the Dirichlet problem in cylindrically symmetric domains, see Theorem \ref{DirichletProblems}.
 \end{remark}
 
 The definition of Morse index will be recalled in Section 3. \par
 Note that, since $N \geq 3$, Theorem \ref{MainTheorem} applies in particular to solutions with Morse index $1$ or $2$, which can be obtained by variational methods (Mountain Pass or constrained minimization) for many superlinear problems. \par
 One of the ingredients to prove Theorem \ref{MainTheorem} is the maximum principle, in particular we will use it in the weak version for domains with small measure, that we derive in Section 2  as a consequence of some Poincar\' e trace inequality in the space $H^{1}_0 (\gO \cup \Gamma _2)  $. \par
  In order to exploit  the information on the Morse index of the solution to get its symmetry, it is important to be able to characterize it as the number of negative eigenvalues of an associated  linear operator. \par
  It turns out that a good eigenvalue problem to consider to this aim is the mixed boundary conditions eigenvalue problem
   \be \label{ProblemaAutovaloriInt}
\begin{cases}
  - \gD w_j + c(x)  w_j =   \gl _j w_j  &  \text{ in }  \gO \\
   w_j =0  & \text{ on } \Gamma _1 \\
    \frac {\partial w_j}{\partial \gn}+ d(x) w_j=  
 \gl _j w_j  & \text{ on } \Gamma _2
 \end{cases}
 \ee
In section 3 we construct and provide the variational characterization of the eigenvalues of this problem, following the usual approach for the Dirichlet problems, but working in the product Hilbert space
$L^2 (\Omega ) \times L^2 (\Gamma _2)$ (see Theorem \ref{varformautov}). \par
We believe that this construction is interesting in itself. \par
Note that \eqref{ProblemaAutovaloriInt} is related to some weighted  eigenvalue problem, that has been considered in the literature.
 In particular in the interesting paper \cite{Ma} (see also the references therein), 
 although a more general problem with weights is considered, the coefficients $c(x)$ and $d(x)$ in \eqref{ProblemaAutovaloriInt} are supposed to be nonnegative, while, dealing with linearized operators of semilinear elliptic problems, in which case $c= - \frac {\partial f}{\partial s}$, $d= - \frac {\partial g}{\partial s}$, this assumption is not reasonable, and will not be assumed by us (see Remark  \ref{CfrAltriAutori} for a more detailed comment). \par
We also would  like to point out that if we were studying harmonic functions (i.e. if $f \equiv 0 $ in \eqref{genprob}) then another eigenvalue problem could be considered, namely 
\eqref{problemaautovalori3}, in order to characterize the Morse index of a solution (see Remark \ref{AltriAutovalori}). \par
 The paper is organized as follows. \par
 In Section 2 we show some Poincar\' e trace inequality and derive some maximum principle. \par
 In Section 3 we present the spectral theory for the eigenvalue problem \eqref{ProblemaAutovaloriInt} and characterize the Morse index of a solution of \eqref{genprob}. \par
 Finally in Section 4 we prove the symmetry result stated in Theorem \ref{MainTheorem}.

\section{Integral inequalities and Maximum Principles}
Let $\Omega $ be a bounded domain in $\R^N$, $N \geq 2$, with its subsets $\Gamma _1$, $\Gamma _2$ of the boundary , as described in 
\eqref{GeneralBoundary1},
\eqref{GeneralBoundary2}. For a mixed boundary condition linear problem we have

\begin{theorem}[Strong Maximum Principle] \label{Strong Maximum Principle}
Let  $v \in H^{1}_0 (\gO \cup \Gamma _2) \cap C^1 (\ov \Omega) $ be a weak solution of 
\begin{equation} \label{SMP} 
\begin{cases} - \gD v + c(x) v \geq 0 \quad &\text{in } \gO \\
v \geq 0 \quad & \text{in } \gO \\
v= 0 \quad & \text{on } \Gamma _1\\
\frac {\partial v} {\partial \gn}+ d(x)v \geq 0 \quad & \text{on }  \Gamma _2 
\end{cases}
\end{equation}
with $c \in L^{\infty}(\Omega)$, $d \in C^0(\Gamma _2)$. \par
Then either $v \equiv 0 $ in $\Omega$ or $v>0 $ in $\Omega \cup \Gamma _2$.
\end{theorem}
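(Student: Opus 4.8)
The plan is to adapt the classical strong maximum principle / Hopf lemma argument to the mixed boundary setting, treating $\Gamma_2$ as if it were an interior piece of the domain after an even reflection, while $\Gamma_1$ plays the role of the usual Dirichlet boundary. First I would observe that on the open set $\Omega$ the function $v$ satisfies $-\Delta v + c(x) v \geq 0$ with $v \geq 0$ and $c \in L^\infty(\Omega)$; hence by the interior strong maximum principle (in its weak $H^1$ form, e.g. via the Harnack inequality of Gilbarg--Trudinger applied to $-\Delta v + c^+(x)v \geq -c^-(x) v \geq 0$, or more simply by rewriting $-\Delta v + (c(x)+k) v \geq k v \geq 0$ for a constant $k$ making the zero-order coefficient nonnegative) either $v \equiv 0$ in $\Omega$, or $v > 0$ everywhere in $\Omega$. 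So the only thing left to prove is that in the second case $v$ does not vanish at any point $x_0 \in \Gamma_2$.

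Suppose then $v > 0$ in $\Omega$ but $v(x_0) = 0$ for some $x_0 \in \Gamma_2$. Since $\Gamma_2$ is a smooth $(N-1)$-submanifold and $v \in C^1(\overline\Omega)$, near $x_0$ the domain looks like a half-ball with flat piece $\Gamma_2$, and $x_0$ is an interior minimum point of $v$ along $\Gamma_2$ with $v(x_0)=0$; being also a boundary minimum of $v$ on $\overline\Omega$, the Hopf boundary point lemma applies to the ball interior to $\Omega$ touching $\Gamma_2$ at $x_0$ (here one uses the operator $-\Delta + (c(x)+k)$ with $c+k \geq 0$, which is permissible since $v(x_0)=0$ kills the contribution of the zero-order term in the Hopf comparison). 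This yields $\frac{\partial v}{\partial \nu}(x_0) < 0$, where $\nu$ is the outer normal. On the other hand, the boundary condition on $\Gamma_2$ gives, in the weak sense, $\frac{\partial v}{\partial \nu}(x_0) + d(x_0) v(x_0) \geq 0$, and since $v(x_0)=0$ this forces $\frac{\partial v}{\partial \nu}(x_0) \geq 0$ — a contradiction. Hence $v > 0$ on $\Gamma_2$ as well, completing the proof.

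The step I expect to be the main obstacle is the rigorous justification that the Hopf lemma may be applied at points of $\Gamma_2$ and that the weak Neumann-type condition $\frac{\partial v}{\partial \nu} + d v \geq 0$ on $\Gamma_2$ translates into a pointwise inequality for the normal derivative at $x_0$. For the former, one needs the interior ball condition at $x_0$, which is guaranteed by smoothness of $\Gamma_2$, together with the regularity $v \in C^1(\overline\Omega)$ so that $\frac{\partial v}{\partial \nu}(x_0)$ is classically defined; one should also note that the weak formulation holds with test functions in $H^1_0(\Omega \cup \Gamma_2)$, i.e. functions that need not vanish on $\Gamma_2$, which is precisely what lets the boundary term on $\Gamma_2$ enter. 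For the latter, since $v \in C^1(\overline\Omega)$ and $d \in C^0(\Gamma_2)$, an integration by parts in the weak inequality localized near $x_0$ shows $\int_{\Gamma_2} \left(\frac{\partial v}{\partial \nu} + d v\right)\varphi \, dx' \geq 0$ for all nonnegative $\varphi \in C_c^\infty(\Gamma_2)$, hence $\frac{\partial v}{\partial \nu} + d v \geq 0$ pointwise on $\Gamma_2$ by the fundamental lemma of the calculus of variations. A minor additional point is the choice of the shift constant $k$: one takes $k \geq \|c\|_{L^\infty(\Omega)}$ so that both the interior Harnack/strong maximum principle and the Hopf lemma apply to the operator $-\Delta + (c+k)$ with nonnegative zero-order coefficient, using $v \geq 0$ to absorb the extra term $k v \geq 0$ on the correct side of each inequality.
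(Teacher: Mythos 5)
Your proof is correct and follows essentially the same argument as the paper: interior strong maximum principle to get $v>0$ in $\Omega$, then Hopf's boundary point lemma at a hypothetical zero $x_0\in\Gamma_2$ to contradict the Neumann condition $\frac{\partial v}{\partial\nu}(x_0)+d(x_0)v(x_0)\geq 0$. The extra care you take with the shift constant $k$ and with extracting the pointwise boundary inequality from the weak formulation is sound and merely fills in details the paper leaves implicit given the $C^1(\overline\Omega)$ regularity assumption.
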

\begin{proof} By the classical strong maximum principle (see e.g. \cite{GiTr}), if $v \not \equiv 0 $ in $\Omega $ then $v>0 $ in $\Omega $
and hence, 
  by continuity,  $v \geq 0 $ on $\Gamma _2 $. \par
 Let  $x_0 \in \Gamma _2 $ and suppose by contradiction that $v(x_0)=0 $. 
 Then $\frac {\partial v } {\partial \nu }(x_0)<0 $ by Hopf's Lemma, since $v$ is positive in $\Omega $ and vanishes in $x_0$. This contradicts the Neumann condition 
 $\frac {\partial v} {\partial \gn}(x_0)+ d(x)v(x_0)=\frac {\partial v} {\partial \gn}(x_0)  \geq 0$. \par
 So $v$ is positive on $\Gamma _2$.
\end{proof}

\par
\medskip

We recall now some well known inequalities in the half spaces (see any book dealing with Sobolev Spaces, e.g. \cite{Br}, \cite{Ev}, \cite{Ke}).
Let us set \par
$\R^N_{+}= \{ x=(x_1, \dots , x_N) \in \R^N:  x_N>0 \} 
$ , \par  
$\R^N_0=  \{ x=(x_1, \dots , x_N) \in \R^N:  x_N=0 \} = \partial \R^N_+
$.

\begin{theorem}[Sobolev and Trace inequalities in $\R^N_+$] 
If $N \geq 3 $  there exist  constants $C_1, C_2 >0 $ such that 
\begin{equation} \label{SobolevInequality} 
(\int _{\R^N_+} |v|^{\frac {2N} {N-2}} \, dx)^{\frac {N-2}{2N} } \leq C_1  ( \int _{\R^N_+} |\nabla v| ^2 \, dx) ^{\frac {1} {2}}
\end{equation}
\begin{equation} \label{TraceInequality} 
 \; (\int _{\R^N_0= \partial \R^N_+} |v|^{\frac {2N-2} {N-2}} \, dx)^{\frac {N-2}{2N-2} } \leq C_2  ( \int _{\R^N_+} |\nabla v| ^2 \, dx) ^{\frac {1} {2}}
\end{equation}
for any
$v \in H^1 (\R^N_+  ) $ (where in the last inequality the value of $v$ on $\Gamma _2$ is to be understood in the sense of traces of functions in Sobolev Spaces).
\end{theorem}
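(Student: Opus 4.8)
The plan is to deduce both inequalities from the classical Gagliardo--Nirenberg--Sobolev inequality in $\R^N$, using an even reflection for \eqref{SobolevInequality} and then the elementary ``fundamental theorem of calculus in the $x_N$ direction'' argument for \eqref{TraceInequality}. First I would reduce to the case $v \in C_c^\infty(\ov{\R^N_+})$: since such functions are dense in $H^1(\R^N_+)$, and both estimates, once known on this dense class, show that a Cauchy sequence in $H^1(\R^N_+)$ is Cauchy also in $L^{2N/(N-2)}(\R^N_+)$ and has traces Cauchy in $L^{(2N-2)/(N-2)}(\R^N_0)$, the general case follows by passing to a.e.\ convergent subsequences to identify the limits.

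For \eqref{SobolevInequality}, given $v \in C_c^\infty(\ov{\R^N_+})$ I would introduce its even extension $\tilde v(x',x_N) = v(x',|x_N|)$, which is Lipschitz with compact support and hence belongs to $H^1(\R^N)$, with $\int_{\R^N}|\nabla \tilde v|^2 = 2\int_{\R^N_+}|\nabla v|^2$ and $\int_{\R^N}|\tilde v|^{2N/(N-2)} = 2\int_{\R^N_+}|v|^{2N/(N-2)}$. Applying the classical Sobolev inequality in $\R^N$ to $\tilde v$ and cancelling the factors of $2$ gives \eqref{SobolevInequality}.

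For \eqref{TraceInequality}, set $q = \frac{2N-2}{N-2}$, so that $q-1 = \frac{N}{N-2}$ and $2(q-1) = \frac{2N}{N-2}$. For $v \in C_c^\infty(\ov{\R^N_+})$ and fixed $x'$ I would write
\[
|v(x',0)|^q = -\int_0^\infty \frac{\partial}{\partial x_N}\bigl(|v(x',x_N)|^q\bigr)\, dx_N \le q \int_0^\infty |v(x',x_N)|^{q-1}\,\Bigl|\frac{\partial v}{\partial x_N}(x',x_N)\Bigr|\, dx_N,
\]
integrate in $x' \in \R^{N-1}$ to get $\int_{\R^N_0}|v|^q\, dx' \le q\int_{\R^N_+}|v|^{q-1}|\nabla v|\, dx$, and then bound the right-hand side by Cauchy--Schwarz by $q\,\bigl(\int_{\R^N_+}|v|^{2(q-1)}\bigr)^{1/2}\bigl(\int_{\R^N_+}|\nabla v|^2\bigr)^{1/2} = q\,\|v\|_{L^{2N/(N-2)}(\R^N_+)}^{q-1}\,\|\nabla v\|_{L^2(\R^N_+)}$. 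Inserting \eqref{SobolevInequality} to estimate the factor $\|v\|_{L^{2N/(N-2)}(\R^N_+)}^{q-1}$ yields $\int_{\R^N_0}|v|^q\, dx' \le q\,C_1^{q-1}\,\|\nabla v\|_{L^2(\R^N_+)}^{q}$, and raising to the power $1/q = \frac{N-2}{2N-2}$ gives \eqref{TraceInequality} with $C_2 = q^{1/q}C_1^{(q-1)/q}$.

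No genuinely deep difficulty arises here; the one point requiring care — and the step I would treat as the ``main obstacle'' in an otherwise routine argument — is the approximation step: one needs the density of $C_c^\infty(\ov{\R^N_+})$ in $H^1(\R^N_+)$ and the continuity of the trace operator $H^1(\R^N_+) \to L^2(\R^N_0)$, so that the a.e.\ subsequential limits used to identify the $L^{2N/(N-2)}(\R^N_+)$ and $L^{(2N-2)/(N-2)}(\R^N_0)$ limits are legitimate. All of this is classical and can be found in the references cited above, so I would simply invoke it.
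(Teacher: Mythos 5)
Your proof is correct. Note that the paper does not actually prove this theorem in the body text: it presents it as a classical fact and simply cites standard references (\cite{Br}, \cite{Ev}, \cite{Ke}); a commented-out fragment in the source hints at adapting the usual Gagliardo--Nirenberg argument directly to the half space via one-sided fundamental-theorem-of-calculus representations. Your route — even reflection $\tilde v(x',x_N)=v(x',|x_N|)$ to reduce the Sobolev inequality to the whole-space case, then the one-dimensional fundamental-theorem-of-calculus plus Cauchy--Schwarz and the already-proved Sobolev inequality for the trace estimate — is a perfectly standard alternative, and the exponent bookkeeping $q-1=\frac{N}{N-2}$, $2(q-1)=\frac{2N}{N-2}$ is exactly right. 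One cosmetic imprecision: the factors of $2$ coming from reflection do not literally cancel, since $\|\tilde v\|_{L^{2N/(N-2)}(\R^N)}=2^{(N-2)/(2N)}\|v\|_{L^{2N/(N-2)}(\R^N_+)}$ while $\|\nabla\tilde v\|_{L^2(\R^N)}=2^{1/2}\|\nabla v\|_{L^2(\R^N_+)}$, leaving a residual factor $2^{1/N}$ that is harmlessly absorbed into $C_1$; you should say ``absorbing the powers of $2$ into the constant'' rather than ``cancelling.'' The density and trace-continuity remarks in your final paragraph correctly identify the one technical point that needs to be invoked, and are indeed classical.
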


Let us now consider  a cylindrically symmetric bounded domain $\Omega $ in $\R^N$, $N \geq 3$, with its subsets $\Gamma _1$, $\Gamma _2$ of the boundary, as described in Definition \ref{DefDominiSimmetrici} and \eqref{FrontieraDominiSimmetrici}.
\par
We take advantage of the simple geometry of our  domains, and prove all the relevant inequalities that we need starting from \eqref{SobolevInequality} and \eqref{TraceInequality}. Of course many of the results hold in a much more general setting (see Remark \ref{DisugPiuGenerali}). \par
If $v \in H_0^1(\Omega \cup \Gamma _2) $ then the the trivial  extension of $v$ to $\R^N_+$ belongs to $H^1 (\R^N_+ )$ and has vanishing trace on 
$\R^N_0 \setminus \Gamma _2$.  \par
As a consequence, using H\" older's inequality, we obtain Poincar\' e's type inequalities both in $\Omega $ and on the flat boundary $\Gamma _2$. \par
 More precisely we have the following

\begin{theorem}[Poincar\' e's inequalities in $H_0^1(\Omega \cup \Gamma _2)$] \label{Poincare's}
Let $N \geq 2$ and  let  $\Omega $ be a cylindrically symmetric domain. There exist  constants $C_1, C_2 >0 $ such that for any
$ v \in H_0^1(\Omega \cup \Gamma _2)$ 
\begin{equation} \label{PoincareInequality1} 
\int _{\Omega} |v|^{2} \, dx \leq C_1 \,  ( \text{meas}_N \; [v \neq 0 ] )^{\frac 2N} \int _{\Omega} |\nabla v| ^2 \, dx
\end{equation}

\begin{equation} \label{PoincareInequality2} 
\int _{\Gamma _2} |v|^{2} \, dx \leq C_2 \, ( \text{meas} _{N} \; [v \neq 0 ] )^{\frac 1{N}} \int _{\Omega} |\nabla v| ^2 \, dx
\end{equation}
where  $[v \neq 0 ] = \{ x \in \Omega  : v(x) \neq 0 \}$.

\end{theorem}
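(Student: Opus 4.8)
The plan is to transplant the half--space Sobolev and trace inequalities \eqref{SobolevInequality}--\eqref{TraceInequality} to $\Omega$ by trivial extension, and then to make the measure of $[v\neq 0]$ appear through H\"older's inequality. I will treat $N\geq 3$; for $N=2$ one argues in the same way, using a finite exponent $q$ in the Sobolev embedding $H^1(\R^N_+)\hookrightarrow L^q(\R^N_+)$ in place of $2N/(N-2)$. First, as recalled above, if $v\in\huno$ and $\tilde v$ denotes its extension by $0$ to $\R^N_+$, then $\tilde v\in H^1(\R^N_+)$, the trace of $\tilde v$ vanishes on $\R^N_0\setminus\Gamma_2$, and $\int_{\R^N_+}|\nabla\tilde v|^2\,dx=\int_\Omega|\nabla v|^2\,dx$, $\int_{\R^N_0}|\tilde v|^2\,dx'=\int_{\Gamma_2}|v|^2\,dx'$. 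I will also use that $\nabla v=0$ a.e.\ on $[v=0]:=\Omega\setminus[v\neq 0]$, so that, e.g., $\int_\Omega|v|\,|\nabla v|\,dx=\int_{[v\neq 0]}|v|\,|\nabla v|\,dx$.

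For \eqref{PoincareInequality1} I would write $\int_\Omega|v|^2\,dx=\int_{[v\neq 0]}|v|^2\cdot 1\,dx$ and apply H\"older's inequality with the conjugate exponents $\tfrac N{N-2}$ and $\tfrac N2$, which yields $\int_\Omega|v|^2\,dx\leq\bigl(\int_\Omega|v|^{2N/(N-2)}\,dx\bigr)^{(N-2)/N}\,(\text{meas}_N[v\neq 0])^{2/N}$; by \eqref{SobolevInequality} applied to $\tilde v$ the first factor is at most $C_1^2\int_\Omega|\nabla v|^2\,dx$, and \eqref{PoincareInequality1} follows after relabelling the constant.

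The inequality \eqref{PoincareInequality2} is the delicate point, precisely because it involves the full $N$--dimensional measure of $[v\neq 0]$ and not the $(N-1)$--dimensional measure of the part of $\Gamma_2$ where the trace of $v$ is nonzero: estimating directly on $\Gamma_2$ by H\"older together with the trace inequality \eqref{TraceInequality} would only produce that weaker, lower--dimensional bound, and there is no inequality controlling the latter quantity by $(\text{meas}_N[v\neq 0])^{1/N}$. The way I would get around this is to convert the boundary integral into a bulk one: approximating $v$ in $\huno$ by functions in $C_c^\infty(\Omega\cup\Gamma_2)$, for which $\tilde v(x',0)^2=-2\int_0^\infty\tilde v(x',t)\,\partial_{x_N}\tilde v(x',t)\,dt$, then integrating over $\R^N_0$ and passing to the limit, one obtains $\int_{\Gamma_2}|v|^2\,dx'\leq 2\int_{\R^N_+}|\tilde v|\,|\nabla\tilde v|\,dx=2\int_{[v\neq 0]}|v|\,|\nabla v|\,dx$. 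Now H\"older's inequality on $[v\neq 0]$ with the three exponents $\tfrac{2N}{N-2}$, $2$, $N$ (whose reciprocals add to $1$) bounds the right--hand side by $2\bigl(\int_\Omega|v|^{2N/(N-2)}\,dx\bigr)^{(N-2)/(2N)}\bigl(\int_\Omega|\nabla v|^2\,dx\bigr)^{1/2}(\text{meas}_N[v\neq 0])^{1/N}$, and applying \eqref{SobolevInequality} to the first factor gives \eqref{PoincareInequality2}. Apart from this maneuver, the proof is just bookkeeping of H\"older exponents.
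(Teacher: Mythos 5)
Your proof is correct and follows essentially the same route as the paper's: trivial extension by zero to $\R^N_+$, Sobolev plus H\"older for \eqref{PoincareInequality1}, and the fundamental-theorem-of-calculus identity $v^2(x',0)=-2\int_0^\infty v\,\partial_{x_N}v\,dt$ to convert the trace integral to a bulk integral for \eqref{PoincareInequality2}. The only cosmetic difference is in the final step of \eqref{PoincareInequality2}, where you run a three-exponent H\"older (with $\tfrac{2N}{N-2},2,N$) followed by Sobolev, whereas the paper applies Cauchy--Schwarz and then feeds in the already proved \eqref{PoincareInequality1}; the two bookkeepings are interchangeable.
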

\begin{proof}  By density we can assume  that $v \in C_c^{\infty}(\Omega \cup \Gamma _2)$ and
we   denote by $v$ also the trivial extension to $\R^N_+ $.\par
 By H\" older's and Sobolev inequalities we have that 
$$
\int _{\Omega} |v|^{2} \, dx =\int _{[v \neq 0 ]} |v|^{2} \, 1  \, dx    \leq 
(\int _{\R^N_+} |v|^{ \frac {2N} {N-2} } \, dx) ^  {\frac {N-2}N} ( \text{meas}_N \; [v \neq 0 ] )^{\frac 2N} \leq  
$$
$$ C_1 \,  ( \text{meas}_N \; [v \neq 0 ] )^{\frac 2N} \int _{\R^N_+} |\nabla v| ^2 \, dx =
C_1 \,  ( \text{meas}_N \; [v \neq 0 ] )^{\frac 2N} \int _{\Omega} |\nabla v| ^2 \, dx
$$ 
and we get \eqref{PoincareInequality1}. \par
To get \eqref{PoincareInequality2} we observe that for any $x'= (x_1, \dots , x_{N-1}) \in \R^{N-1}$  we have that 
$$
v^2(x',0)= - \int _0 ^{+ \infty} \frac {\partial v^2} {\partial x_N}(x',t) \, dt= 
-2 \int _0 ^{+ \infty} v(x',t) \frac {\partial v} {\partial x_N}(x',t) \, dt  
$$
Integrating over  $\Gamma _2 $ and using the Poincar\' e's inequality \eqref{PoincareInequality1} and H\" older's inequality, we get 
$$
\int _{\Gamma _2}v^2(x',0) \, dx' \leq  2 \int _{\R^N_+}|v||\nabla v| \,dx \leq 
2 (\int _{\Omega}|v|^2)^{\frac 12} (\int _{\Omega}|\nabla v|^2)^{\frac 12}  \,dx \leq 
$$
$$ C_2 ( \text{meas}_N \; [v \neq 0 ] )^{\frac 1N} \int _{\Omega} |\nabla v| ^2 \, dx
$$ \par
\end{proof}
\par
\medskip

\begin{remark} \label{DisugPiuGenerali} It is well known that a pure Sobolev inequality (or trace inequality) in $H^1 (\Omega)$, i.e. an inequality where in the right hand side there is only  the $L^r $ norm of the gradient, it is not true in general (e.g. in bounded domains, since the constant functions belongs to the space). 
On the other hand it is well known that a pure Sobolev inequality and Poincar\' e 's inequality hold in 
$H_0^1 (\gO \cup \Gamma _2)$, provided $\Omega $ is a bounded domain and $\Gamma _1 $ has a positive $(N-1)$-dimensional Hausdorff measure (see e.g. 	\cite{Ke}, \cite{LiPaTr} and the references therein).
\end{remark}
\par 
\medskip
\begin{proposition}[Weak maximum principle in small domains] \label{massimodebole}
Let $N \geq 2$ ,  $\Omega $  a cylindrically symmetric domain,
 $\Omega ' \subseteq \Omega $, $\ga  \in L^{\infty}(\Omega ')$,  $\gb  \in L^{\infty}(\partial \Omega ' )$ with 
$\Vert  \ga \Vert _{L^{\infty }(\Omega ')} \leq M $, $\Vert  \gb \Vert _{L^{\infty }(\partial \Omega ')} \leq M $   and  $v \in H^1 (\Omega ') $.  
Assume that
\begin{equation} \label{maximum} 
\begin{cases} - \gD v \leq  \ga(x) v  \quad &\text{in } \gO '\\
v \leq 0 \quad & \text{on }  \Gamma _1 ' = \partial \Omega ' \cap \R^N_{+} \\
\frac {\partial v} {\partial \gn} \leq  \gb (x)\,v  \quad & \text{on } \Gamma _2 ' =  \partial \Omega ' \cap \R^N_{0}  
\end{cases}
\end{equation}
(this means that $v^+  \in H^1 _0 (\Omega ' \cup \Gamma _2 ') $ and  
$\int _{\Omega '} \nabla v \cdot \nabla \varphi  \leq \int _{\Omega '} \ga (x) v \varphi + \int _{\Gamma _2 '} \gb (x) v  \varphi $
for any $\varphi \in H^1 _0 (\Omega ' \cup \Gamma _2 ') $, $\varphi \geq 0 $). \par
Then there exists $\delta >0 $, depending on $M$,  such that if $\text{meas}_N \; ([v>0] ) < \delta $ then $v \leq 0 $ in $\Omega $.\par
Here $[v>0]= \{x \in \Omega ' : v(x)>0  \}$, and the conditions  is satisfied in particular if $\text{meas}_N (\Omega ') < \delta $.
\end{proposition}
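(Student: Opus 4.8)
The plan is to use the standard Stampacchia-type absorption argument: test the differential inequality against $\varphi = v^+$ (which is admissible precisely by the weak formulation recalled in the statement of the proposition), and then absorb the zeroth-order terms on the right-hand side into the Dirichlet integral on the left by means of the Poincar\'e inequalities of Theorem \ref{Poincare's}, the point being that the small-measure hypothesis makes the factors $\mathrm{meas}_N([v\neq 0])^{2/N}$ and $\mathrm{meas}_N([v\neq 0])^{1/N}$ arbitrarily small.

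First I would note that the trivial extension of $v^+$ to $\R^N_+$ belongs to $H^1(\R^N_+)$ with trace vanishing on $\R^N_0\setminus\Gamma_2'$, and since $\Gamma_2'=\partial\Omega'\cap\R^N_0\subseteq\overline\Omega\cap\{x_N=0\}=\overline{\Gamma_2}$, the trace of this extension vanishes on $\R^N_0\setminus\Gamma_2$ up to a set of zero $(N-1)$-measure; hence $v^+\in H^1_0(\Omega\cup\Gamma_2)$ and Theorem \ref{Poincare's} applies to it with the ambient constants $C_1,C_2$. Write $\mu=\mathrm{meas}_N([v>0])$; note $[v^+\neq 0]=[v>0]$ and the gradient of the extension vanishes off $\Omega'$. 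Choosing $\varphi=v^+$ in the weak inequality, using $\nabla v\cdot\nabla v^+=|\nabla v^+|^2$, $v\,v^+=(v^+)^2$, and $\ga(x)\le M$, $\gb(x)\le M$ a.e., I obtain
\[
\int_{\Omega'}|\nabla v^+|^2\;\le\; M\int_{\Omega'}(v^+)^2+M\int_{\Gamma_2'}(v^+)^2\;\le\; M\int_{\Omega}(v^+)^2+M\int_{\Gamma_2}(v^+)^2 .
\]
Applying \eqref{PoincareInequality1} and \eqref{PoincareInequality2} to $v^+$ gives
\[
\int_{\Omega'}|\nabla v^+|^2\;\le\; M\bigl(C_1\,\mu^{2/N}+C_2\,\mu^{1/N}\bigr)\int_{\Omega'}|\nabla v^+|^2 .
\]
Thus, fixing $\gd>0$ (depending only on $M$, since $C_1,C_2$ are universal) so small that $M\bigl(C_1\,\gd^{2/N}+C_2\,\gd^{1/N}\bigr)<1$, the assumption $\mu<\gd$ forces $\int_{\Omega'}|\nabla v^+|^2=0$. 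Feeding $\nabla v^+=0$ back into \eqref{PoincareInequality1} yields $\int_\Omega(v^+)^2=0$, i.e. $v^+\equiv 0$, so $v\le 0$ in $\Omega'$; and the final assertion is immediate because $\mathrm{meas}_N(\Omega')<\gd$ implies $\mu\le\mathrm{meas}_N(\Omega')<\gd$.

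The only genuinely delicate point is the bookkeeping with the subdomain $\Omega'$: one has to check carefully that $v^+$ extended by zero really lands in $H^1_0(\Omega\cup\Gamma_2)$ and that $\Gamma_2'$ sits inside $\Gamma_2$ up to a $(N-1)$-null set, so that Theorem \ref{Poincare's} is legitimately invoked with $\gO$ in place of $\gO'$; everything else (the choice of test function, the sign manipulations, the absorption) is routine. Note also that the argument never uses positivity of $\ga$ or $\gb$, only their $L^\infty$ bound, which is exactly what is needed for the later application to linearized operators.
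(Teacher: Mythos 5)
Your proof is correct and follows essentially the same route as the paper's: extend $v^+$ by zero to $\Omega$, test the weak inequality against $v^+$, and absorb the right-hand side using the two Poincar\'e inequalities of Theorem \ref{Poincare's} with the small-measure hypothesis. You are a bit more explicit than the paper about verifying that the extended $v^+$ lies in $H^1_0(\Omega\cup\Gamma_2)$ and that $\Gamma_2'\subseteq\Gamma_2$, but this is the same argument.
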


\begin{proof} Let us denote by $v$ also the trivial extension to $\Omega $, which belongs to  $H^1 _0 (\Omega \cup \Gamma _2 )$.
By hypothesis the nonnegative function $v^+ $ belongs to $H^1 _0 (\Omega \cup \Gamma _2)$ and can be used as a test function, yielding 
$$
\int _{\Omega } |\nabla v^+|^2 \leq  M  ( \,  \int _{\Omega }  (v^+)^2 +  \int _{\Gamma _2 } (v^+)^2 \, ) 
$$
On the other hand by the Poincar\' e's inequalities \eqref{PoincareInequality1}, \eqref{PoincareInequality2} we get 
$$
 M  ( \,  \int _{\Omega }  (v^+)^2 +  \int _{\Gamma _2 } (v^+)^2 \, ) \leq 
 $$
$$
 M  \, C \,  [ \; (\text{meas}_N \; ([v>0] ))^{\frac 2N}+(\text{meas}_N \; ([v>0] ))^{\frac 1N} ] \;
 \int _{\Omega } |\nabla v^+|^2 
$$
If the measure of $([v>0] ) $ is sufficiently small then $ M  \, C \,  [ \; (\text{meas}_N \; ([v>0] ))^{\frac 2N}+(\text{meas}_N \; ([v>0] ))^{\frac 1N} ] <1 $, which implies that $v^+ \equiv 0 $ in $\Omega $.
\end{proof}

\par 
\medskip

 \section{Morse index and Spectral theory}
 In this section we consider  mixed boundary problems in general bounded domains, i.e.  we suppose that 
  $\Omega $ is a bounded domain in $\R^N$, $N \geq 2$,  and $\Gamma _1$, $\Gamma _2 $ are relatively open nonempty disjoint subset of the boundary $\partial \Omega$ that satisfy \eqref{GeneralBoundary1}, \eqref{GeneralBoundary2}.
  \par
     \smallskip

Let us recall the following definition, for solutions of \eqref{genprob}.
\begin{definition} Let $u$ be a $C^1 (\gO \cup \Gamma _2)$ solution of \eqref{genprob}. 
\begin{itemize}
\item [i)]
We say that $u$ is  stable (or that has zero Morse index) if the quadratic form 
\begin{equation} \label{formaquadraticalinearizzato} 
Q_u (\psi ; \gO ) = \int _{\gO}  |\nabla \psi |^2 - \int _{\gO} \frac {\partial f}{\partial u} (x, u)|\psi |^2  dx -
\int _{\Gamma _2}  \frac {\partial g}{\partial u} (x, u)|\psi |^2 dx'   
\end{equation}
satisfies  $Q_u (\psi ; \gO )  \geq 0$ for any $\psi  \in C_c^1 (\gO \cup \Gamma _2 )$.
\item [ii)] $u$ has  Morse index  equal to the integer $\mu=\mu (u)\geq 1$ if $\mu $ is the maximal dimension of a subspace of  $ C_c^1 (\gO \cup \Gamma _2 )$ where the quadratic form is negative definite.
\item [iii)] $u$ has infinite Morse index  if for any integer $k$ there is a $k$-dimensional subspace of  $ C_c^1 (\gO \cup \Gamma _2 )$ where the quadratic form is negative definite.
\end{itemize}
\end{definition}
In general, to handle the definition it is convenient to relate the Morse index to the number of negative eigenvalues of a suitable linear eigenvalue problem.
We consider the following one
 $$ 
\begin{cases}
  - \gD w_j + c(x)  w_j =   \gl _j w_j  &  \text{ in }  \gO \\
   w_j =0  & \text{ on } \Gamma _1 \\
    \frac {\partial w_j}{\partial \gn}+ d(x) w_j=  
 \gl _j w_j  & \text{ on } \Gamma _2
 \end{cases}
 $$
 with 
 $$
 c= - \frac {\partial f }{\partial u} \quad , \quad d= - \frac {\partial g }{\partial u}
 $$

\begin{remark}\label{CfrAltriAutori}
Some other eigenvalue problems with weights have been considered in the literature (see  \cite{Au},  \cite{GaRoSa}, 
 \cite{Ma} and the references therein). In particular in   \cite{Ma}  the eigenvalue problem
  $$ 
\begin{cases}
  - \gD w_j + c(x)  \, w_j =   \gl _j \, m(x) \,w_j  &  \text{ in }  \gO \\    
  \frac {\partial w_j}{\partial \gn}+ d(x) \, w_j=  
 \gl _j \, n(x) \,w_j  & \text{ on } \partial \Omega 
 \end{cases}
 $$
 with positive weights $m$, $n$ is considered and the eigenvalues sequence constructed by constrained minimization. \par
  In that paper the weights can also vanish in part of the domain, but
the nonnegativity of the coefficients $c$, $d$ is assumed (or more generally coercivity of the corresponding operator), while
in the applications to nonlinear problems as \eqref{genprob} many choices of nonlinearities $f$ and $g$ can lead instead to negative or sign changing coefficients and noncoercive operator (e.g. the case of many superlinear problems).
Therefore when dealing with Morse index properties we prefer to consider problems without weights but with possibly negative or sign changing coefficients (having bounded negative parts). \par
\end{remark}

\medskip
We  now construct the eigenvalues sequence and prove the variational characterization of the eigenvalues following the standard methods used for Dirichlet problems (based on the theory of positive compact selfadjoint operators) by working in the product space $L^2 (\Omega ) \times L^2 (\Gamma _2)$ (we will give all the details in the sequel).\par
\smallskip
Since  $\Gamma _1  $ has a positive $(N-1)$ dimensional Hausdorff measure, the scalar product  in the  Hilbert space
 $\huno $, can be  defined by  
 $$
(f,g)_{\huno}=  \int _{\gO} \nabla f\,\cdot \, \nabla g \, dx
$$
We will denote by the same symbol a function belonging to $\huno $ and its trace on the boundary. \par
Let us consider the bilinear form in $ \huno$ defined by
 \be \label{bilinearform} B(u,\gf)= \int _{\gO} \left [ \nabla u \cdot \nabla \gf + cu \gf \right ]
 + \int _{\Gamma _2}  du \gf  
 \ee
 where we suppose that 
 \be \label{ipotesi coefficienti1}
 c^- \in L^{\infty}(\Omega) \; , \;d^- \in L^{\infty}(\Gamma _2)  
 \ee
 and 
  \be \label{ipotesi coefficienti2}
 c \in  L^{\frac N2} ( \gO ) \; , \;d \in  L^{N-1} ( \Gamma _2) 
 \ee
 if $N \geq 3$, while $c$ and $d$ can belong to any $L^q $ space, $q \geq 1$, if $N=2$. \par
\medskip
  Let us  define,
  together with the bilinear form $B$ defined in \eqref{bilinearform}, the bilinear form
  \be \label{bilinearformLambda} B^{\gL}(u,\gf)= \int _{\gO} \left [ \nabla u \cdot \nabla \gf + (c+ \Lambda )u \gf \right ]
 + \int _{\partial \gO}  (d+ \Lambda )u \gf  
 \ee
 for $\gL \geq 0 $.    \par
 Since \eqref{ipotesi coefficienti1} and \eqref{ipotesi coefficienti2} hold, $B$ and $B^{\gL}$ are continuous symmetric bilinear forms on $\huno$, and there exists $\gL \geq 0 $ such that $ B^{\gL}$ is coercive in  $ \huno $, i.e. it is an equivalent scalar product in $\huno $.\par
 \smallskip
Let us  consider the Hilbert space $\bold V=L^2(\Omega) \times L^2 (\Gamma _2)$, with the scalar product
$  (\,(f_1,f_2)\cdot(g_1,g_2)  \,) = \int _{\Omega } f_1\, g_1 \, dx + \int _{\Gamma _2 } f_2 \, g_2 \, dx' $. \par
  We identify $f=(f_1,f_2) \in \bold V $ with the continuous linear functional 
 \be \gf \in \huno \mapsto (f_1,\gf)_{\ldue (\Omega)}+ (f_2,\gf)_{\ldue (\Gamma _2) }
 \ee
 where as before $\gf$ denote also the trace of the function
   on $\Gamma _2$.\par 
 \medskip
 By the Riesz representation theorem,
 for any $f=(f_1,f_2) \in \bold V $ there exists a unique $u=: T\, f \in \huno $ such that  
 $$
 B^{\gL}(u,\gf) = (f,\gf)_{\bold{V} }=(f_1,\gf)_{\ldue (\Omega)}+ (f_2,\gf)_{\ldue (\Gamma _2) }\; \forall \;\gf \in \huno
 $$
  i.e. $u$ is the unique weak solution of the problem
  \begin{equation} \label{equazioneoperatore}
\begin{cases} - \gD u + \left (c(x) + \Lambda   \right ) u = f_1 \quad &\text{in } \gO \\
u= 0 \quad & \text{on } \Gamma _1 \\
\frac {\partial u}{\partial \gn}+ (d(x)+ \Lambda ) u= f_2  \quad &\text{on } \Gamma _2
\end{cases}
\end{equation}
The solution $u$ belongs to $\huno$ and
$$
\Vert u \Vert _{\huno} \leq c \Vert f \Vert _{\bold{V}} 
$$ 
\par
If we identify a function $u \in \huno $ with the couple $(\,u, \text{ Trace }(u)\,)\in \bold V $, we can consider $T$ as a continuous linear operator $T: \bold V \to \bold V $ defined by
  $ f=(f_1, f_2) \mapsto (\,u, \text{ Trace }(u)\,) $, which maps  $\bold V $ into $ \bold V $.\par
  $T$  is \emph{compact} because of the compact embedding of $\huno $ in $\bold V $.
 Moreover 
 it is a \emph{positive} operator, since (recall that $ B^{\gL} $ is an equivalent scalar product in $\huno $ ) \par
 \smallskip
  $
   (Tf,f)_{ \bold{V} }=(u,f)_{\bold{V}} =B^{\gL}(u,u) >0  $ if $f=(f_1, f_2) \neq 0$, so that $u \neq 0 $, \par
    \smallskip
 \noi  and it is also \emph{selfadjoint}. \par
  Indeed 
 if $Tf=u$, $Tg=v$, i.e \par
 \smallskip
 $B^{\gL}(u,\phi) = (f,\phi)_{\bold{V}  }$,   
 $B^{\gL}(v,\phi) = (g,\phi)_{\bold{V} } $,   
  then \par
  \smallskip
 $(Tf,g)_{\bold{V} }=(u,g)_{\bold{V}}=(g,u)_{\bold{V}} =B^{\Lambda}(v,u)=B^{ \Lambda}(u,v)=  $ \par
 \smallskip
 $ (f,v)_{\bold{V}  } = 
 (f,Tg)_{\bold{V} }
  $.
  \par
  \smallskip
   Thus, by the spectral theory of positive compact selfadjoint operators in Hilbert spaces there exist a nonincreasing sequence $\{ \gm _j^{\gL} \} $ of positive eigenvalues with 
 $\lim _{j \to \infty}\gm _j^{\gL}=0 $ and a corresponding sequence $\{ w_j \} \subset \huno $ of eigenvectors such that $T(w_j)= \gm _j^{\gL} w_j $ and $w_j$ is an orthonormal basis of $ \bold V $.\par
 Putting $  \gl _j^{\gL} = \frac 1 { \gm _j^{\gL}  }$ then $w_j$ solve the problem
 $$
 \begin{cases}
  - \gD w_j + (c(x) + \gL ) w_j =   \gl _j^{\gL} w_j  &  \text{ in }  \gO \\
   w_j =0  & \text{ on } \Gamma _1 \\
    \frac {\partial w_j}{\partial \gn}+ (d(x)+ \Lambda ) w_j=  
 \gl _j^{\gL} w_j  & \text{ on } \Gamma _2
 \end{cases}
 $$
 Translating, and denoting by $\gl _j$ the differences $\gl _j=  \gl _j^{\gL} -\gL  $, we conclude that there exist a sequence $\{\gl _j \}$ of eigenvalues, with
 $- \infty < \gl _1 \leq \gl _2 \leq \dots $, $\lim _{j \to + \infty} \gl _j = + \infty $, and a corresponding sequence of eigenfunctions $\{ w_j \}$ that weakly solve the systems
 \be \label{problemaautovalori}
\begin{cases}
  - \gD w_j + c(x)  w_j =   \gl _j w_j  &  \text{ in }  \gO \\
   w_j =0  & \text{ on } \Gamma _1 \\
    \frac {\partial w_j}{\partial \gn}+ d(x) w_j=  
 \gl _j w_j  & \text{ on } \Gamma _2
 \end{cases}
 \ee
Moreover by elliptic regularity theory  the eigenfunctions $w_j $ belong at least to $  C^{1}(\gO)$.\par
\medskip

We now  collect in the next theorem  the variational formulation and some properties of eigenvalues and eigenfunctions.\\
 
\begin{theorem}\label{varformautov} Suppose that  \eqref{ipotesi coefficienti1} and \eqref{ipotesi coefficienti2} hold. \par
There exist sequences of eigenvalues $\{\gl _j \}_{j \in \N} \subset \R $, with $\lim _{j \to \infty} \lambda _j = + \infty $, and eigenfunctions $\{ w_j \}_{j \in \N} \subset \huno$ that satisfy \eqref{problemaautovalori}. \par
The sequence
$\{ (w_j, \text{Trace}(w_j) ) \}$ is an orthonormal basis of the space $\textbf{V}= L^2(\Omega) \times L^2 (\Gamma _2)$. \par
Then defining the Rayleigh quotient
\be R(v)= \frac {B(v,v)}{ (v,v)_{\ldue (\Omega)} + (v,v)_{\ldue (\Gamma _2)}} \quad \text{ for } v \in \huno \quad v \neq 0
\ee
with $B(.,.)$ as in \eqref{bilinearform}, 
 the following properties hold, where  $\textbf{V}_k$ denotes a $k$-dimensional subspace of $\huno$ and the orthogonality conditions  $v \bot w_k $  or  $v \bot \textbf{V}_k $  stand for the orthogonality in $\textbf{V}$.  
\begin{itemize}
\item[i)] $\gl _1 = \text{ min } _{v\in \huno \,, \, v \neq 0} R(v) =  \text{ min } _{v\in \huno \, , \,  (v,v)_{\bold{V}}=1 } B(v,v) $
\item[ii)]   $\gl _m = \text{ min } _{v\in \huno \,, \, v \neq 0 \, , \, v \bot w_1,\dots , v \bot w_{m-1}} R(V)$\\ $ =  \text{ min } _{v\in \huno \, , \,  (v,v)_{\bold{V}}=1 \, , \, v \bot w_1,\dots , v \bot w_{m-1} } B(v,v) $  if $m \geq 2$ 
\item[iii)] $\gl _m = \text{ min } _{\textbf{V}_m }   \text{ max } _{ v \in \textbf{V}_m \, , \, v \neq 0 } R(v) $
\item[iv)] $\gl _m = \text{ max } _{\textbf{V}_{m-1} }   \text{ min } _{ v \bot  \in \textbf{V}_{m-1 }\, , \, v \neq 0 } R(v) $
\item[v)] If $w \in \huno$, $w \neq 0$, and $R(w)= \gl _1$, then $w$ is an eigenfunction corresponding to $\gl _1$.
\item[vi)] If  $w $ is a first eigenfunction, then $w^{+}$ and $w^{-}$ are eigenfunctions, if they do not vanish.
\item[vii)] The first eigenfunction does not change sign in $\gO $ and  the first eigenvalue is simple, i.e. up to scalar multiplication there is only one eigenfunction corresponding to the first eigenvalue. 
\item[viii)] If $c'(x)\in L^{\infty}(\Omega )$, $d'(x)\in L^{\infty}(\Gamma _2 )$ and 
$c \geq  c'   $, $d \geq  d'   $   then  $ \gl _1 \geq { \gl '} _1$, where $\gl _1 ' $ denotes the corresponding first eigenvalue.
\end{itemize}
\end{theorem}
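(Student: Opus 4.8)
\medskip
\noindent\emph{Plan of proof.} The plan is to transport the spectral decomposition of the operator $T$ obtained above to the Rayleigh quotient $R$ and then reproduce the classical Courant--Fischer theory. First I would record that $B^{\gL}(v,\gf)=B(v,\gf)+\gL(v,\gf)_{\textbf{V}}$ for $v,\gf\in\huno$ (each identified with its pair $(v,\mathrm{Trace}(v))\in\textbf{V}$), so that $R(v)=B^{\gL}(v,v)/(v,v)_{\textbf{V}}-\gL$ for $v\neq 0$. From $Tw_j=\gm_j^{\gL}w_j$ and the definition of $T$ one reads $B^{\gL}(w_j,\gf)=\gl_j^{\gL}(w_j,\gf)_{\textbf{V}}$ for all $\gf\in\huno$, with $\gl_j^{\gL}=1/\gm_j^{\gL}>0$ and $\gl_1^{\gL}\le\gl_2^{\gL}\le\cdots\to+\infty$; in particular $B^{\gL}(w_i,w_j)=\gl_i^{\gL}\gd_{ij}$, so $\{w_j/\sqrt{\gl_j^{\gL}}\}$ is orthonormal in $(\huno,B^{\gL})$. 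Next I would check that it is complete there: if $B^{\gL}(v,w_j)=0$ for all $j$ then $(v,w_j)_{\textbf{V}}=0$ for all $j$, hence $v=0$ in $\textbf{V}$ and thus in $\huno$. Consequently, with $c_j=(v,w_j)_{\textbf{V}}$ for $v\in\huno$, one obtains $(v,v)_{\textbf{V}}=\sum_j c_j^2$ and, using $B^{\gL}(v,w_j)=\gl_j^{\gL}c_j$, also $B^{\gL}(v,v)=\sum_j\gl_j^{\gL}c_j^2$ (the series converging because $v\in\huno$).

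Granting this, (i)--(iv) become the usual min--max statements for the nondecreasing sequence $\{\gl_j^{\gL}\}$, shifted back by $-\gL$. Indeed $R(v)+\gL=\big(\sum_j\gl_j^{\gL}c_j^2\big)/\big(\sum_j c_j^2\big)$ is a weighted average of the $\gl_j^{\gL}$, hence $\ge\gl_1^{\gL}$ in general, and $\ge\gl_m^{\gL}$ once $c_1=\dots=c_{m-1}=0$, i.e.\ once $v\perp w_1,\dots,w_{m-1}$ in $\textbf{V}$, with equality on the corresponding eigenspace; this gives (i) and (ii). For (iii) one notes that $R$ has maximum $\gl_m$ on $\mathrm{span}(w_1,\dots,w_m)$, while any $m$-dimensional $\textbf{V}_m\subset\huno$ contains, by a dimension count, a nonzero $v\perp w_1,\dots,w_{m-1}$, so $\max_{\textbf{V}_m}R\ge\gl_m$ by (ii); (iv) is the dual argument, intersecting $\mathrm{span}(w_1,\dots,w_m)$ with the $\textbf{V}$-orthogonal complement of a given $(m-1)$-dimensional subspace.

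For (v), if $R(w)=\gl_1=\min R$ then $w$ is a critical point of $R$, so $\tfrac{d}{dt}R(w+t\gf)\big|_{t=0}=0$ yields $B(w,\gf)=\gl_1(w,\gf)_{\textbf{V}}$ for all $\gf\in\huno$, i.e.\ $w$ solves \eqref{problemaautovalori} with $\gl=\gl_1$ (equivalently, $\sum_j(\gl_j-\gl_1)c_j^2=0$ forces $c_j=0$ whenever $\gl_j>\gl_1$). For (vi), since $w^{\pm}\in\huno$, $w^+w^-=0$ a.e.\ in $\gO$ and on $\Gamma_2$, and $\nabla w^+\cdot\nabla w^-=0$ a.e., one gets $B(w^+,w^-)=0$, hence $B(w,w)=B(w^+,w^+)+B(w^-,w^-)$ and $(w,w)_{\textbf{V}}=(w^+,w^+)_{\textbf{V}}+(w^-,w^-)_{\textbf{V}}$; thus $\gl_1=R(w)$ is a convex combination of $R(w^+)$ and $R(w^-)$, both $\ge\gl_1$ by (i), so $R(w^{\pm})=\gl_1$ whenever $w^{\pm}\not\equiv 0$, and (v) identifies these as eigenfunctions.

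Finally, for (vii) I would argue that if a first eigenfunction $w$ changed sign, then $w^+$ and $w^-$ are both nonnegative nonzero eigenfunctions by (vi); but applying the strong maximum principle (Theorem \ref{Strong Maximum Principle}) to $-\gD w^++(c-\gl_1)w^+=0$, $w^+\ge 0$, gives $w^+>0$ in $\gO\cup\Gamma_2$, which contradicts $w^+w^-\equiv 0$ with $w^-\not\equiv 0$; hence first eigenfunctions keep a sign, and the same principle makes a nonnegative one strictly positive in $\gO$. Simplicity then follows because two linearly independent positive first eigenfunctions $w,\tilde w$ would make $v=\tilde w(x_0)\,w-w(x_0)\,\tilde w$ a nonzero first eigenfunction vanishing at a prescribed $x_0\in\gO$, which is excluded. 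Item (viii) is immediate: $c\ge c'$ and $d\ge d'$ give $B(v,v)\ge B'(v,v)$ for every $v\in\huno$, hence $R\ge R'$ pointwise and $\gl_1=\min R\ge\min R'=\gl_1'$. The delicate steps I expect are the completeness of $\{w_j\}$ in $\huno$ and the termwise identities for $(v,v)_{\textbf{V}}$ and $B^{\gL}(v,v)$ together with convergence of the series (first paragraph), and the exact form of the strong maximum principle used in (vii): for a general $c\in L^{N/2}$ one needs a slight extension of Theorem \ref{Strong Maximum Principle}, whereas in the application to \eqref{genprob} the coefficient $c=-\partial f/\partial u$ is bounded and Theorem \ref{Strong Maximum Principle} applies directly, once one recalls that the eigenfunctions are $C^1(\gO)$.
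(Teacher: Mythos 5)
Your proof is correct and follows essentially the same route as the paper: reduce to the coercive case $\gL=0$, transport the spectral decomposition of the compact operator to the Rayleigh quotient via the orthonormal bases of $\textbf V$ and of $(\huno,B^{\gL})$, and then run the classical Courant--Fischer arguments for (i)--(v), the sign-splitting argument for (vi), the strong maximum principle for (vii), and direct monotonicity of $B$ for (viii). The only small variations are cosmetic -- your convex-combination phrasing of (vi) and your vanishing-at-a-point argument for simplicity in (vii) instead of the paper's $L^2$-orthogonality contradiction -- and your remark that Theorem \ref{Strong Maximum Principle} is stated for $c\in L^\infty$ while \eqref{ipotesi coefficienti2} only gives $c\in L^{N/2}$ is a fair observation that applies to the paper's own proof as well.
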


\begin{proof}  We just proved the existence of the eigenvalues and eigenfunctions. \par
As before let us consider for   $\gL \geq 0 $ the bilinear form $ B^{\gL}(v,v)= B(v,v) + \gL (v,v)_{\bold{V} }$, which is an equivalent scalar product in $\huno$, and  define $R_{\gL}(V) =\frac {B^{\gL}(v,v)}{ (v,v)_{ \bold{V} } } = \frac {B^{\gL}(v,v)}{ (v,v)_{\ldue (\Omega)} + (v,v)_{\ldue (\Gamma _2)}} \quad \text{ for } v \in \huno \quad v \neq 0 $. 
  Since  $  R_{\gL}(V) = R(V) + \gL $, once the properties are proved for  $  B_{\gL}(V) $ (which is an equivalent scalar product in $\huno $)  and its eigenvalues $\gL _j^{\gL}$, we recover the results stated by translation. Therefore for simplicity of notations we assume from the beginning that $\gL =0$ i.e. that $B(.,.)$ is an equivalent scalar product in $\huno $.\\
The proofs   are   standard  (see e.g. \cite{Ev}, \cite{Ke} for the Dirichlet problem), so we only sketch them.\par
\medskip
i) \ The sequence $\{ w_j \}$ is an orthonormal basis of $\bold{V}  $, and  since $B(w_k,w_j)= \gl _k (w_k,w_j)_{\bold{V} }$ (in particular $=0$ if $k\neq j $), the sequence  
$\{ (\gl _j ) ^{-\frac 12}\, w_j \}$  is an orthonormal basis of $\huno   $. It follows that if $u = \sum _{k=1}^{\infty } d_k w_j $ is the Fourier expansion of a function $u$ in 
$\bold{V} $, the series converges to $u$ in $\huno $ as well. 
If now $(u,u)_{\bold{V} } = \sum _{k=1}^{\infty } d_k^2 =1$, then 
$B(u,u)= \sum _k \gl _k d_k ^2 \geq \gl _1 \sum _k d_k ^2= \gl _1 $ and i) follows.\\
ii) \ If $ v \bot w_1,\dots , w_{m-1} $ and $(v,v)_{\bold{V} }=1)$, then $v = \sum _{k=m}^{\infty } d_k w_j $ and as before $B(v,v) \geq \gl _m$ and since $B(w_m, w_m)= \gl _m$ ii) follows.\\
iii) \ If $\text{ dim }(\textbf{V}_m) =m $ and $\{ v_1, \dots , v_m \}$ is a basis of $\textbf{V}_m  $, there exists a linear combination $0 \neq v=\sum _{i=1}^m \ga _i v_i$ which is orthogonal to $w_1$, \dots $w_{m-1}$ ($m$ coefficients and $m-1$ unknown), so that by ii) we obtain that
$ \text{ max } _{ v \in \textbf{V}_m \, , \, v \neq 0 } R(v) \geq \gl _m$. On the other hand if $\textbf{V}_m= \text{ span }(w_1,\dots , w_m)$ then 
$ \text{ max } _{ v \in \textbf{V}_m \, , \, v \neq 0 } R(v) \leq  \gl _m$, so that iii) follows.\\
iv) \  The proof  is similar. If $\{ v_1, \dots , v_{m-1}\} $ is a basis of an $m-1$-dimensional subspace $\textbf{V}_{m-1}$, there exists a linear combination
$0 \neq w=\sum _{i=1}^m \ga _i w_i$ of the first $m$ eigenfunctions which is orthogonal to $\textbf{V}_{m-1}$, and $R(w,w) \leq \gl _m $.
So  $\text{ min } _{ v \bot  \in \textbf{V}_{m-1 }\, , \, v \neq 0 } R(v) \leq \gl _m $, but taking $\textbf{V}_m= \text{ span }(w_1,\dots , w_{m-1})$ then  
$\text{ min } _{ v \bot  \in \textbf{V}_{m-1 }\, , \, v \neq 0 } R(v) \geq \gl _m $, so that iv) follows.\\
v) \ By normalizing we can suppose that $(w,w)_{\bold{V} }=1$. Let $v \in \huno $, $t>0$. Then by i) $R(w+tv)= \frac { B(w+tv,w+tv)}{(w+tv)_{\ldue}}\geq \gl _1$, i.e.
$B(w,w)+ t^2 B(v,v) + 2t B(w,v) \geq \gl _1 \left [ (w,w)_{\bold{V} }  + t^2 (v,v)_{\bold{V} }  + 2t (w,v)_{\bold{V} }  \right ] = \gl _1 + \gl _1 t^2 (v,v) + 2t \gl _1 (w,v)$. 
Since $B(w,w) = \gl _1$, dividing by $t$ and letting $t \to 0$ we obtain that $B(w,v) \geq \gl _1 (w,v)_{\bold{V} } $ and changing $v$ with $-v$ we deduce that 
$B(w,v) = \gl _1 (w,v)_{\ldue} $ for any $V \in \huno $, i.e. $w$ is a first eigenfunction. \\
vi) \ Multiplying  \eqref{problemaautovalori} by $w_{1}^{+}$ and integrating we deduce that 
$B( w_{1}^{+},w_{1}^{+}) \leq  \gl _1 ( w_{1}^{+}, w_{1}^{+} )$, so that by v) $w_{1}^ {+}$ is a first eigenfunction. The same applies to  $w_{1}^{-}$. \\
vii) \ The conclusion follows from the strong maximum principle. In fact  if $w^{+}$ does not vanish, it is a first eigenfunction by vi), and by the strong maximum principle (Theorem \ref{Strong Maximum Principle}) is strictly positive in $\gO$, i.e. $w >0 $ in $\gO $ if it is positive somewhere.\\
If $w_1$, $w_2$ are two eigenfunctions corresponding to $\gl _1$, they do not change sign in $\gO $, so that they can not be orthogonal in $\ldue $. This implies that the first eigenvalue is simple.\\
viii) Let $w_1$ be  the first eigenfunction for the system \eqref{problemaautovalori}, by normalizing it, we can assume that  $(w_1, w_1)_{\bold{V}}=1$. \\
Denoting by $B '$ the bilinear form corresponding to the coefficients $ c' $, $d'$  we have that
\begin{multline}
 \gl _1 = B(w_1,w_1)=  \int _{\gO}  [ \,    |\nabla w_1|^2   +  c(w_1)^2 \, dx \, ]  + \int _{\Gamma _2} d  (w_1)^2     \geq \\
\int _{\gO}  [ \,    |\nabla w_1|^2   +  c'(w_1)^2 \, dx \, ]  + \int _{\Gamma _2} d'  (w_1)^2     = B' (w_1,w_1) \geq {\gl _1}'
\end{multline}
\end{proof}

 \medskip
 We now consider a solution $u$ of the nonlinear problem \eqref{genprob} and the linearized eigenvalue problem at $u$, namely
 the problem   \eqref{problemaautovalori}, with 
 $
  c(x)= - \frac {\partial f}{\partial u}(x,u(x))
$, 
$ d(x)= - \frac {\partial g}{\partial u} (x,u(x))
$. \par
\medskip
 \begin{theorem} Let $\gO $ be a bounded domain in $\RN$. Then the Morse index of a solution $U$ to \eqref{genprob} equals the  number of negative eigenvalues of the linearized eigenvalue
problem
\be \label{ProbAutovNonLin}
\begin{cases}
  - \gD w_j - \frac {\partial f}{\partial u}(x,u(x)) \, w_j =   \gl _j w_j  &  \text{ in }  \gO \\
   w_j =0  & \text{ on } \Gamma _1 \\
    \frac {\partial w_j}{\partial \gn}- \frac {\partial g}{\partial u}(x,u(x)) \, w_j=  
 \gl _j w_j  & \text{ on } \Gamma _2
 \end{cases}
\ee
   \par

\end{theorem}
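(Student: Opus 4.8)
The plan is to recognize the quadratic form $Q_u(\,\cdot\,;\gO)$ of \eqref{formaquadraticalinearizzato} as the bilinear form $B$ of this section evaluated for the coefficients $c(x)=-\frac{\partial f}{\partial u}(x,u(x))$, $d(x)=-\frac{\partial g}{\partial u}(x,u(x))$, and then to read the Morse index directly off the spectral decomposition furnished by Theorem \ref{varformautov}. First I would check that these coefficients meet the standing hypotheses \eqref{ipotesi coefficienti1}--\eqref{ipotesi coefficienti2}: since $u$ is a bounded $C^1$ solution and $f,\frac{\partial f}{\partial s},g,\frac{\partial g}{\partial s}$ are locally H\"older continuous by \eqref{Regolarita-f,g}, the maps $x\mapsto\frac{\partial f}{\partial u}(x,u(x))$ and $x'\mapsto\frac{\partial g}{\partial u}(x',u(x'))$ are continuous and bounded, hence lie in $L^{\infty}$ and therefore in $L^{N/2}(\gO)$ and $L^{N-1}(\Gamma_2)$. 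Thus Theorem \ref{varformautov} applies and gives eigenvalues $\gl_1\le\gl_2\le\cdots$ with $\gl_j\to+\infty$ and eigenfunctions $\{w_j\}$ for which $\{(w_j,\mathrm{Trace}(w_j))\}$ is an orthonormal basis of $\mathbf{V}=L^2(\gO)\times L^2(\Gamma_2)$; and comparing \eqref{formaquadraticalinearizzato} with \eqref{bilinearform} gives $Q_u(\gps;\gO)=B(\gps,\gps)$ for every $\gps$.

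Second, I would replace the test space $C_c^1(\gO\cup\Gamma_2)$ in the definition of $\mu(u)$ by the Hilbert space $\huno$. Since $C_c^{\infty}(\gO\cup\Gamma_2)\subseteq C_c^1(\gO\cup\Gamma_2)\subseteq\huno$, the first being dense in the third, and since $B$ is continuous on $\huno$ while negative definiteness of $B$ on a finite-dimensional subspace $W$ is equivalent, by compactness of the unit sphere of $W$, to a uniform estimate $B(w,w)\le-\gd\Vert w\Vert_{\huno}^2$ on $W$, a standard perturbation argument (approximate a basis of a negative-definite subspace of $\huno$ and use that this uniform estimate is stable under small $\huno$-perturbations) shows that $B$ admits a negative-definite subspace of dimension $k$ in $C_c^1(\gO\cup\Gamma_2)$ if and only if it admits one in $\huno$. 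Hence $\mu(u)$ equals the maximal dimension of a subspace of $\huno$ on which $B$ is negative definite.

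Third, I would use the spectral expansion as in the proof of Theorem \ref{varformautov}. For $v\in\huno$ put $d_j=(v,w_j)_{\mathbf{V}}$; the series $\sum_j d_j w_j$ converges to $v$ in $\huno$, and since $B(w_k,w_j)=\gl_k\,\gd_{kj}$ one gets the convergent expansion
\be
B(v,v)=\sum_{j}\gl_j\,d_j^2
\ee
(equivalently $B(v,v)=B^{\gL}(v,v)-\gL\Vert v\Vert_{\mathbf{V}}^2$ for $\gL$ large). Let $m$ be the number of negative eigenvalues, which is finite because $\gl_j\to+\infty$, so that $\gl_m<0\le\gl_{m+1}$ (with $m=0$ when $\gl_1\ge0$). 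On the genuinely $m$-dimensional space $\mathrm{span}(w_1,\dots,w_m)$ every nonzero $v$ has $B(v,v)=\sum_{j\le m}\gl_j d_j^2\le\gl_m\sum_{j\le m}d_j^2<0$, so $\mu(u)\ge m$. Conversely, if $V\subseteq\huno$ had dimension $m+1$, the linear map $V\to\R^m$, $v\mapsto(d_1,\dots,d_m)$, would have a nontrivial kernel, hence there would exist $v\in V\setminus\{0\}$ with $d_1=\cdots=d_m=0$; then $B(v,v)=\sum_{j\ge m+1}\gl_j d_j^2\ge0$ since $\gl_j\ge0$ for $j\ge m+1$, contradicting negative definiteness of $B$ on $V$. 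Therefore $\mu(u)\le m$, and the two inequalities give $\mu(u)=m$, the number of negative eigenvalues of \eqref{ProbAutovNonLin}.

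The only mildly delicate point is the density step of the second paragraph: one must be careful that negative definiteness of the continuous form $B$ descends from a finite-dimensional subspace of $\huno$ to a subspace of the same dimension inside $C_c^1(\gO\cup\Gamma_2)$, which is exactly where the uniform bound on the unit sphere and the continuity of $B$ are used; everything else is routine bookkeeping with the orthonormal basis $\{(w_j,\mathrm{Trace}(w_j))\}$ of $\mathbf{V}$ and the monotonicity of the $\gl_j$.
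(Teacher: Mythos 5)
Your proof is correct, and it diverges from the paper's in one genuinely useful way. For the inequality $m(U)\geq\mu(U)$ the two arguments are essentially the same mechanism: you run a rank--nullity argument against the orthogonal complement of $\mathrm{span}(w_1,\dots,w_m)$, the paper invokes the min--max characterization (iii) of Theorem \ref{varformautov}; these are two phrasings of one fact. The interesting divergence is in $\mu(U)\geq m(U)$. The paper appeals to ``continuity of the eigenvalues'' under shrinking of the domain, passing to a compactly contained subdomain $\gO'\subset\gO$ on which there are still $m$ negative eigenvalues, whose eigenfunctions (after trivial extension and mollification) become admissible $C_c^1(\gO\cup\Gamma_2)$ test functions. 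That step requires a domain-perturbation result that the paper does not actually prove. You replace it by noting that the Morse index may equivalently be computed over $\huno$, because $C_c^\infty(\gO\cup\Gamma_2)$ is dense in $\huno$ by definition of that space, $B$ is continuous, and negative definiteness of a continuous quadratic form on a finite-dimensional subspace is an open condition under small perturbations of a basis; then $\mathrm{span}(w_1,\dots,w_m)\subset\huno$ does the job directly. This is cleaner and more self-contained: it substitutes a routine density/continuity approximation for the domain-continuity of eigenvalues, while landing in exactly the same place.

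One small remark: your identity $B(v,v)=\sum_j\gl_j d_j^2$ deserves a word on convergence (the $\gl_j$ are not all of one sign). This is harmless: only finitely many $\gl_j$ are negative, and $\sum_j(\gl_j+\gL)d_j^2=B^{\gL}(v,v)<\infty$ for $\gL$ large gives absolute convergence; you flag this with the $B^{\gL}$ remark, so the point is covered, but it is worth making the absolute convergence explicit.
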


\begin{proof} 
 Let us denote by $\mu (U)$ the Morse index as previously defined, and by $m(U)$ the number of negative eigenvalues of \eqref{ProbAutovNonLin}.\\
  If the quadratic form  (defined in \eqref{formaquadraticalinearizzato}) $Q_u$  is negative definite on  a $m$-dimensional subspace of $ C_c^1 (\gO \cup \Gamma _2)$, by   Proposition \ref{varformautov} iii)  the $m$-th  eigenvalue $\gl _m $ is negative, so that $m (U)  \geq \gm (U)$. \\
  On the other hand if there are $m$ negative eigenvalues of problem \eqref{ProbAutovNonLin} in $\gO $, by the continuity of the eigenvalues there exists a subdomain $\gO ' \subset \gO $ 
  where there are $m$ negative eigenvalues and  corresponding orthogonal eigenfunctions $w^1, \dots , w^m$ which by trivial extension can be considered as functions  with compact support in $\gO \cup \Gamma _2 $. Regularizing  these functions  we get that the quadratic form
 $Q_u$ is negative definite on a subspace of $ C_c^1 (\gO \cup \Gamma _2)$ spanned by $m$ linear independent functions, so that $\mu (u)  \geq m(u)$.
\end{proof}

\par

\medskip

\begin{remark} \label{AltriAutovalori}
If one of the coefficients $c$, $d$ is nonnegative (or more generally if the linear operator associated is coercive on $\huno$ ) then other choices of eigenvalue problem are possible.\par
If e.g. 
 $c \geq 0 $ a modification of the preceeding construction yields a a compact operator in the space $L^2 (\Gamma _2 )$ and a corresponding  sequence of eigenvalues of the problem
  $$
\begin{cases}
  - \gD w_j +c(x)  w_j  =0  &  \text{ in }  \gO \\
   w_j =0  & \text{ on } \Gamma _1 \\
    \frac {\partial w_j}{\partial \gn}+ d(x) w_j =\gl _j '' w_j & \text{ on } \Gamma _2
 \end{cases}
 $$
 This case occurs in particular in the study of harmonic functions  subjected to nonlinear boundary conditions, that has been studied in recent papers (see  e.g. \cite{AbDoOMe}, \cite{BeFoSe} ). In this case $f \equiv 0$ in the nonlinear problem, so that $c \equiv 0 $ in the linearized one, and the previous eigenvalue problem becomes
  \be \label{problemaautovalori3}
\begin{cases}
  - \gD w_j   =0  &  \text{ in }  \gO \\
   w_j =0  & \text{ on } \Gamma _1 \\
    \frac {\partial w_j}{\partial \gn}+ d(x) w_j =\gl _j '' w_j & \text{ on } \Gamma _2
 \end{cases}
 \ee

 \par

In  any case the eigenvalues share the same variational characterization  as the eigenvalues considered by us,  so that  the number of negative eigenvalues is the same and characterize the Morse index of a solution. \par
 The general linear eigenvalue problem \eqref{problemaautovalori} that we consider, with the \emph{same} eigenvalue parameter in the equation and in the nonlinear boundary condition, has the advantage of not requiring the nonnegativity of the coefficients $c$, $d$; moreover the
  existence and characterization of the eigenvalues it is natural to prove in the product space $\bold V$.
 \end{remark}

\section{Proof of Theorem \ref{MainTheorem} }

In this section we will prove Theorem \ref{MainTheorem}, so we will work under the assumptions on $\Omega $, $f$, $g$ and $u$ stated in this theorem. The symmetry for the solution that we are going to prove is the sectional foliated Schwarz symmetry that we have introduced in Definition  \ref{sectionfoliatedSS}. \par
To compare it with the usual foliated Schwarz symmetry let us recall this last definition.
\begin{definition}\label{foliatedSS}
Let $\gO$ be a rotationally symmetric domain in $\RN$, $N\geq 2$. We say that a continuous function $u: \Omega \to \R $ is foliated Schwarz symmetric if there exists a vector $p \in \RN$, 
$|p|=1$, such that $u(x)$ depends only on $r=|x|$ and $\theta= \arccos \left ( \frac {x}{|x|}\cdot p  \right ) $ and $u$ is nonincreasing in $\theta $.\par
\end{definition}
 Let us observe that  for solutions $u$ of  semilinear elliptic equations,  foliated Schwarz symmetry implies that either $u$ is radial or it is strictly decreasing in the angular variable $\theta $ (see e.g. \cite{Pa}, \cite{PaWe}). \par 
The sectional foliated Schwarz symmetry just means to have foliated Schwarz symmetry on any section   $\Omega ^h=\overline {\Omega } \cap \{ x_N=h \}$, $h \in (0,b)$, with respect to the same vector $p'= (p_1, \dots , p_{N-1},0) \in \R^N$,  $|p'=1|$. \par
\smallskip
For the proof of Theorem \ref{MainTheorem} we need to fix some notations and prove some preliminary results.\par
\smallskip

 Let $N \geq 3 $ and let us denote, for simplicity of notations
 $$
 S^{N-2}=\{ \bold e = (e_1, \dots ,e_N):|\bold e |=1 \, ; \, e_N=0 \}
 $$
  i.e. $S^{N-2}$ it is the set of the directions orthogonal to the direction $\bold e _N=(0, \dots,1)$. \par
  For any such direction let us define the hyperplane $T(\bold e)$ and the  ''cap'' $\gO ( \bold e) $ as
$$ T(e)  = \{ x \in \RN : x \cdot e=0\}\; , \quad  \gO (e)= \{ x \in \gO : x \cdot e >0\}  $$
with the corresponding boundaries
$$\Gamma _1 (\bold e)= (\Gamma _1 \cap \overline {\Omega (\bold e)}) \cup (T(\bold e) \cap \Omega (\bold e)) \; ; \;
\Gamma _2 (\bold e)= \Gamma _2 \cap (\overline {\Omega (\bold e)} \setminus T(\bold e) )$$
 Moreover if $x \in \gO $ we denote by $\gs _{e}(x)= x -2 (x \cdot e)e$ the reflection of $x$ through the hyperplane $T(e)$ and by $u_{\gs _{e}}$ the function $u \circ \gs _{e} $ .\par

   We start by proving a sufficient condition for the sectional foliated Schwarz symmetry.
   
 \begin{lemma}  \label{condsuffFSS} Let $\Omega $ be a cylindrically symmetric domain in $\R^N$, $N \geq 3$, as Definition \ref{DefDominiSimmetrici} , and let 
 $u: \Omega \to \R $ be a continuous function.
 \par
Assume that for every direction $\bold e =(e_1, \dots e_N ) \in S^{N-2}$  it holds that
 either $u \leq  u_{\gs (\bold e)}$ in $\Omega (\bold e )$ or $u\geq  u_{\gs (\bold e)}$ in $\Omega (\bold e )$. \par
 Then $u$ is sectionally foliated Schwarz symmetric.
 \end{lemma}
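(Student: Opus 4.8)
The plan is to cut $\gO$ by the horizontal hyperplanes $\{x_N=h\}$, apply the classical sufficient condition for (ordinary) foliated Schwarz symmetry on each slice, and then show that the slices can be equipped with a common symmetry axis. For $h\in(0,b)$ the section $\gO^h=\gO\cap\{x_N=h\}$ is, by Definition \ref{DefDominiSimmetrici}, a ball or an annulus of $\R^{N-1}$ centred on the $x_N$-axis, hence a rotationally symmetric domain of $\R^{N-1}$. I identify a direction $\mathbf e=(e_1,\dots,e_{N-1},0)\in S^{N-2}$ with the unit vector $\mathbf e'=(e_1,\dots,e_{N-1})$ of $\R^{N-1}$, so that $S^{N-2}$ becomes exactly the unit sphere of $\R^{N-1}$. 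Since $e_N=0$, the reflection $\gs_{\mathbf e}$ maps each slice $\{x_N=h\}$ into itself, acting there as the reflection $\gs'_{\mathbf e'}$ of $\R^{N-1}$ in the hyperplane $\{x'\cdot\mathbf e'=0\}$ through the centre of $\gO^h$, and $\gO(\mathbf e)$ is the union over $h\in(0,b)$ of the caps $\gO^h\cap\{x'\cdot\mathbf e'>0\}$. Consequently the hypothesis ``$u\le u_{\gs_{\mathbf e}}$ in $\gO(\mathbf e)$ or $u\ge u_{\gs_{\mathbf e}}$ in $\gO(\mathbf e)$'' is equivalent to: for each $\mathbf e'$, one of the inequalities $u(\cdot,h)\circ\gs'_{\mathbf e'}\ge u(\cdot,h)$, $u(\cdot,h)\circ\gs'_{\mathbf e'}\le u(\cdot,h)$ holds on $\gO^h\cap\{x'\cdot\mathbf e'>0\}$, \emph{and the same one holds for all $h\in(0,b)$ simultaneously}. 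This $h$-independence of the alternative is the structural fact driving the proof.

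For fixed $h$ the restricted hypothesis is precisely the hypothesis of the well known sufficient condition for foliated Schwarz symmetry in a rotationally symmetric domain of $\R^{N-1}$, $N-1\ge2$ (see \cite{Pa}, \cite{PaWe} and the references therein). Hence each $u(\cdot,h)$ is foliated Schwarz symmetric with respect to some axis $q(h)$ in the unit sphere of $\R^{N-1}$. If every slice is radial, then $u$ depends on $x'$ only through $|x'|$ and the conclusion is immediate. Otherwise I fix $h_0$ with $u(\cdot,h_0)$ non-radial; a non-radial foliated Schwarz symmetric function has a \emph{unique} axis (invariance under the rotation subgroups fixing two distinct axes generates all of $O(N-1)$, which would force radiality), so $p':=q(h_0)$ is well defined.

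Now I propagate the axis. Let $\mathbf e'$ be a unit vector of $\R^{N-1}$ with $\mathbf e'\cdot p'>0$. Since $u(\cdot,h_0)$ is foliated Schwarz symmetric about $p'$ and non-radial, reflection in $\{x'\cdot\mathbf e'=0\}$ moves every point of the cap $\{x'\cdot\mathbf e'>0\}$ strictly further from $p'$ in angle, so $u(\cdot,h_0)\circ\gs'_{\mathbf e'}\le u(\cdot,h_0)$ there, with strict inequality somewhere (equality everywhere would make $u(\cdot,h_0)$ symmetric in $\{x'\cdot\mathbf e'=0\}$ and hence, together with its axial symmetry about $p'$, radial). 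Thus the ``$\le u$'' alternative fails on slice $h_0$ for the direction $\mathbf e'$, so by the $h$-independence above the other alternative must hold globally, i.e. $u(\cdot,h)\circ\gs'_{\mathbf e'}\le u(\cdot,h)$ on $\gO^h\cap\{x'\cdot\mathbf e'>0\}$ for \emph{every} $h\in(0,b)$. Letting $\mathbf e'\cdot p'\to0^+$ with $\mathbf e'\to\mathbf f$, and also with $\mathbf e'\to-\mathbf f$, for a given $\mathbf f\perp p'$, and using continuity of $u$, I obtain that $u(\cdot,h)$ is symmetric in every hyperplane of $\R^{N-1}$ containing the $p'$-axis, i.e. axially symmetric about $p'$; and axial symmetry together with $u(\cdot,h)\circ\gs'_{\mathbf e'}\le u(\cdot,h)$ for all $\mathbf e'\cdot p'>0$ yields, by performing one reflection inside a single two-plane through the axis, that $u(\cdot,h)$ is nonincreasing in $\arccos\big(\tfrac{x'}{|x'|}\cdot p'\big)$. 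Hence every $u(\cdot,h)$ is foliated Schwarz symmetric about the same $p'$, which is exactly sectional foliated Schwarz symmetry with $p=(p',0)$.

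The delicate step is the uniform-axis argument in the third paragraph: one has to force the per-slice axes to coincide, and this works only because the hypothesis is stated globally on $\gO(\mathbf e)$ — so that for a fixed reflection the monotonicity alternative is the same on all slices. The remaining ingredients (uniqueness of the axis of a non-radial foliated Schwarz symmetric function, and recovering angular monotonicity from axial symmetry plus one-sided reflection inequalities through the limit $\mathbf e'\cdot p'\to0^+$ and a two-plane reduction) are standard, but need to be assembled carefully so that the monotonicity direction, and not merely the axis, is shown to be the same on all sections.
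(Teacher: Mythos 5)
Your proof is correct, and it follows the same basic route as the paper's (slicing $\gO$ at each height $h$ and applying the known sufficient condition for foliated Schwarz symmetry in the ball/annulus $\gO^h\subset\R^{N-1}$). The paper dispatches this in one sentence, which leaves implicit exactly the point you rightly identify as the delicate one: the per-slice criterion only yields, a priori, a slice-dependent axis $q(h)$, and one must show the axis (and the monotonicity direction) can be chosen independently of $h$. Your argument — fix a non-radial slice $h_0$, use uniqueness of the axis there, observe that the hypothesis forces the same alternative on every slice simultaneously since it is posed on the whole cap $\gO(\bold e)$, then recover axial symmetry by letting $\bold e'\cdot p'\to 0^+$ from both sides and deduce angular monotonicity in a two-plane — is a correct and complete way to supply this. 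The small auxiliary facts you invoke (uniqueness of the axis of a non-radial foliated Schwarz symmetric function, and that monotonicity from $p'$ plus a reflection in a hyperplane through a direction $\bold e'$ with $\bold e'\cdot p'>0$ is strict somewhere unless the slice is radial) are standard and correctly used; the $N-1=2$ case of the uniqueness claim needs a slightly different argument than the group-generation one you sketch (use periodicity plus monotonicity to force constancy), but the conclusion is the same. So this is the paper's approach with the genuinely non-trivial step made explicit, which is a useful clarification.
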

 
 \begin{proof}
 It follows from an analogous  sufficient condition for the foliated Schwarz symmetry in rotationally symmetric domains, just applied to each section $\Omega ^h $, $h \in (0,b)$, which is either a ball or an annulus in $\R^{N-1}$ (see \cite{Pa}, \cite{PaWe} and the references therein).
 \end{proof}
 Let us also observe that in general if $u$ is a solution of a semilinear elliptic equation, then the sectional foliated Schwarz symmetry of $u$ implies that either 
 $u(.,x_N)$ is radial for every $x_N$ or it is \emph{strictly} decreasing in the angular variable $\theta $ (see the proof of  Theorem \ref{MainTheorem} that follows). \par
 \medskip
 
 As we saw in previous section 
 the Morse index of a solution $u$ to \eqref{genprob}, with  $f(x,s)=f(|x'|, x_N,s)$, $g(x',s)=f(|x'|,s)$,  equals the  number of negative eigenvalues of the linearized eigenvalue problem
\be \label{ProbAutovNonLinSimm}
\begin{cases}
  - \gD w_j - \frac {\partial f}{\partial u}(|x'|, x_N,u(x)) \, w_j =   \gl _j w_j  &  \text{ in }  \gO \\
   w_j =0  & \text{ on } \Gamma _1 \\
    \frac {\partial w_j}{\partial \gn}- \frac {\partial g}{\partial u}(|x'|,u(x)) \, w_j=  
 \gl _j w_j  & \text{ on } \Gamma _2
 \end{cases}
\ee
in the whole domain $\Omega $.\par

 \smallskip
 
 Now we consider a similar eigenvalue problem but in the caps $\Omega (e)$, and we denote by
 $\gl _j^{e}$ and  $\gf _j^{e}$, $j \in \N$, the corresponding eigenvalues and eigenfunctions:
 
   \be \label{AutovalCappe}
 \begin{cases}
  - \gD \gf _j^{\bold e} - \frac {\partial f}{\partial u}(|x'|,x_N,u)   \gf _j^{\bold e} =   \gl _j^{\bold e}  \gf _j^{\bold e}  &  \text{ in }  \gO (\bold e) \\
   \gf _j^{\bold e} =0  & \text{ on }( \Gamma _1 (\bold e)  ) \cup T(\bold e) \\
    \frac {\partial \gf  _j^{\bold e}}{\partial \gn} - \frac {\partial g}{\partial u}(|x'|,u)  \gf _j^{\bold e}=  
\gl  _j^{\bold e} \gf _j^{\bold e}  & \text{ on } \Gamma _2 (\bold e)
 \end{cases}
 \ee
 
 The  same properties as  for the eigenvalues and eigenfunctions defined by \eqref{ProbAutovNonLinSimm} hold.
 In particular if  we define the quadratic form 
 $$
  Q_u^{\bold e} (\psi  ) =
 \int _{\gO (\bold e)}  |\nabla \psi |^2 - \int _{\gO (\bold e)} \frac {\partial f}{\partial u} (|x'|,x_N, u)|\psi |^2  dx -
\int _{\Gamma _2 (\bold e )}  \frac {\partial g}{\partial u} (|x'|, u)|\psi |^2 dx'   
 $$
for $ \psi  \in 
 H_0^1 (\Omega (\bold e) \cup \Gamma _2 (\bold e))$,  and the Rayleigh qoutient 
 $$
  R^{\bold e}(v)= \frac {Q_u^{\bold e}(v)}{ (v,v)_{\ldue (\Omega (\bold e))} + (v,v)_{\ldue (\Gamma _2 (\bold e))}}
  $$ 
   for $ v \in 
 H_0^1 (\Omega (\bold e) \cup \Gamma _2 (\bold e))$, $ v \neq 0
$,
 then 
 \be \label{primoautovalcappa} \gl _1^{e}= \text{ min } _{v\in  H_0^1 (\Omega (\bold e) \cup \Gamma _2 (\bold e)) \,, \, v \neq 0} \; R^{\bold e}(v) 
  \ee
  Note that
  the first eigenfunction $\gf ^{\bold e}:=\gf _1^{e} $ does not change sign in $\Omega (\bold e)$. \par
 \smallskip
 We have 
 \begin{lemma} \label{esistcappaconautovalpositivo} Let $u$ be a solution of problem \eqref{genprob} with Morse index  $\gm (u) \leq N-1$.
 Then there exists a direction 
 $\bold e \in S^{N-2}$ such that the first eigenvalue $\gl _1 ^{\bold e} \geq 0 $.
 \end{lemma}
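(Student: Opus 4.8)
The plan is to argue by contradiction: assume $\gl_1^{\bold e}<0$ for \emph{every} $\bold e\in S^{N-2}$ and deduce that $\gm(u)\ge N$, against the hypothesis $\gm(u)\le N-1$. Since, by the theorem identifying the Morse index of $u$ with the number of negative eigenvalues of the linearized problem \eqref{ProbAutovNonLinSimm}, it suffices to exhibit an $N$-dimensional subspace of $\huno$ on which the quadratic form $Q_u$ is negative definite, the whole argument will consist in building such a subspace out of the first cap eigenfunctions.

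For each $\bold e\in S^{N-2}$ let $\gf^{\bold e}=\gf_1^{\bold e}>0$ be the first eigenfunction of the cap problem \eqref{AutovalCappe} on $\gO(\bold e)$, normalized, say, by $\|\gf^{\bold e}\|_{\ldue(\gO(\bold e))}=1$. Since $\gf^{\bold e}$ vanishes on $(\Gamma_1\cap\overline{\gO(\bold e)})\cup(T(\bold e)\cap\gO(\bold e))$, its extension by $0$ to $\gO$ (still denoted $\gf^{\bold e}$) belongs to $\huno$, is supported in $\overline{\gO(\bold e)}\subset\{x\cdot\bold e\ge0\}$, and satisfies $Q_u(\gf^{\bold e})=\gl_1^{\bold e}\bigl(\|\gf^{\bold e}\|^2_{\ldue(\gO(\bold e))}+\|\gf^{\bold e}\|^2_{\ldue(\Gamma_2(\bold e))}\bigr)<0$. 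Because the first cap eigenvalue is simple and depends continuously on $\bold e$ — transplant each cap problem onto the fixed half‑section $\{x\in\gO:x_1'>0\}$ by the rotation of the $x'$-variables carrying $\bold e$ to $\bold e_1$, under which $\gO$ is invariant and the coefficients become continuous functions of $\bold e$ — the map $\bold e\mapsto\gf^{\bold e}$ may be chosen continuous. The functions $\gf^{\bold e}$ and $\gf^{-\bold e}$ have essentially disjoint supports (inside $\{x\cdot\bold e>0\}$ and $\{x\cdot\bold e<0\}$), hence they are linearly independent and, for all $(s,t)\neq(0,0)$, $Q_u(s\gf^{\bold e}+t\gf^{-\bold e})=s^2Q_u(\gf^{\bold e})+t^2Q_u(\gf^{-\bold e})<0$; thus $Q_u$ is negative definite on the $2$-dimensional space $W_{\bold e}:=\mathrm{span}\{\gf^{\bold e},\gf^{-\bold e}\}$.

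Now set $\gm:=\gm(u)\le N-1$, let $w_1,\dots,w_\gm$ be eigenfunctions of \eqref{ProbAutovNonLinSimm} for the negative eigenvalues $\gl_1\le\dots\le\gl_\gm<0$ (so $\gl_{\gm+1}\ge0$), put $X:=\mathrm{span}\{w_1,\dots,w_\gm\}$, and let $P$ be the $\bold V$-orthogonal projection onto $X$. Expanding in the eigenbasis exactly as in the proof of Theorem \ref{varformautov}, every $v\in\huno$ with $Pv=0$ satisfies $Q_u(v)=B(v,v)\ge\gl_{\gm+1}(v,v)_{\bold V}\ge0$; combined with $Q_u<0$ on $W_{\bold e}\setminus\{0\}$ this gives $W_{\bold e}\cap\ker P=\{0\}$, so $P|_{W_{\bold e}}$ is injective. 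In particular $\gm\ge2$ and, for each $\bold e$, $P\gf^{\bold e}$ and $P\gf^{-\bold e}$ are linearly independent in $X\cong\R^\gm$. Hence $\bold e\mapsto\Psi(\bold e):=(P\gf^{\bold e},P\gf^{-\bold e})$ is a continuous map from $S^{N-2}$ into the set of linearly independent ordered pairs of $\R^\gm$, which $\mathbb{Z}/2$-equivariantly deformation retracts (Gram--Schmidt) onto the Stiefel manifold $V_2(\R^\gm)$ of orthonormal $2$-frames; moreover $\Psi(-\bold e)$ equals $\Psi(\bold e)$ with its two entries interchanged. Thus there is a $\mathbb{Z}/2$-equivariant map from the antipodal sphere $S^{N-2}$ to $V_2(\R^\gm)$ endowed with the free frame-swapping action.

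The final step is the main obstacle: one must know that no such equivariant map exists when $\gm\le N-1$. The $\mathbb{Z}/2$-index of the antipodal $S^{N-2}$ equals $N-2$, whereas the $\mathbb{Z}/2$-index of $V_2(\R^\gm)$ with the frame-swapping action equals $\gm-2$ — after the change of frame $(a,b)\mapsto(\frac{a+b}{\sqrt2},\frac{a-b}{\sqrt2})$ the swap becomes $(p,q)\mapsto(p,-q)$, exhibiting $V_2(\R^\gm)$ equivariantly as an $S^{\gm-2}$-bundle over $S^{\gm-1}$ with fibrewise antipodal action, of index $\gm-2$. Since an equivariant map does not increase the index, $N-2\le\gm-2$, i.e.\ $\gm(u)\ge N$, a contradiction; therefore $\gl_1^{\bold e}\ge0$ for some $\bold e\in S^{N-2}$. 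This last conclusion may equivalently be obtained by the Borsuk--Ulam--type argument used for rotationally symmetric domains in \cite{Pa} and \cite{PaWe}, now with the reduced sphere $S^{N-2}$ of admissible directions in place of $S^{N-1}$. Note that the convexity of $f$ and $g$ plays no role in this lemma; it enters only afterwards, when the information $\gl_1^{\bold e}\ge0$ is turned into the symmetry of $u$.
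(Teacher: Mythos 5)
Your broad strategy — topological obstruction via Borsuk–Ulam type reasoning applied to the cap eigenfunctions — is the same as the paper's, but you route it differently, and the route you chose has a genuine gap in the key step.

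The gap is in the assertion that the $\mathbb{Z}/2$-index of $V_2(\R^{\gm})$ with the frame‑swap action equals $\gm-2$. The justification you give — that after the change of coordinates $(p,q)=\bigl(\frac{a+b}{\sqrt2},\frac{a-b}{\sqrt2}\bigr)$ the space is an $S^{\gm-2}$‑bundle over $S^{\gm-1}$ with fibrewise antipodal action and is therefore "of index $\gm-2$" — is not valid: the $\mathbb{Z}/2$‑index of a free $\mathbb{Z}/2$‑bundle is not, in general, the index of its fibre. (For instance, $S^1\to S^1$ viewed as an $S^0$‑bundle with antipodal fibre action and rotation‑by‑$\pi$ on the total space has fibre index $0$ but total index $1$.) The obvious equivariant map $(p,q)\mapsto q$ only gives $\mathrm{ind}\,V_2(\R^{\gm})\le\gm-1$, which yields $N-2\le\gm-1$, i.e.\ $\gm\ge N-1$, and that is \emph{not} a contradiction with the hypothesis $\gm\le N-1$. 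So as written the argument does not close; to make the Stiefel route work one would have to actually prove the sharper index bound, which requires a nontrivial equivariant cohomology computation you do not carry out.

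The paper avoids this entirely, and its device also shows how to repair your argument. Rather than carrying both $\gf^{\bold e}$ and $\gf^{-\bold e}$ as a $2$‑frame, the paper normalizes them so that the single odd, disjointly supported function $\gps^{\bold e}$ (proportional to $\gf_1^{\bold e}$ on $\gO(\bold e)$ and to $-\gf_1^{-\bold e}$ on $\gO(-\bold e)$) is $\bold V$‑orthogonal to $w_1$ \emph{a priori}, then applies plain Borsuk–Ulam to the odd map $h(\bold e)=\bigl((\gps^{\bold e},w_2)_{\bold V},\dots,(\gps^{\bold e},w_j)_{\bold V}\bigr):S^{N-2}\to\R^{j-1}$ with $j-1\le N-2$; the resulting zero gives a $\gps^{\bold e}\perp w_1,\dots,w_j$, hence $Q_u(\gps^{\bold e})\ge0$, hence one of the two cap eigenvalues is nonnegative. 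In your notation this amounts to: scale $\gf^{\pm\bold e}$ so that $(\gf^{\pm\bold e},w_1)_{\bold V}=1$, set $\gps^{\bold e}=\gf^{\bold e}-\gf^{-\bold e}$, and observe that $\bold e\mapsto P\gps^{\bold e}$ is odd, continuous, takes values in $X\cap w_1^{\perp}\cong\R^{\gm-1}$, and is nowhere zero under your contradiction hypothesis (else $Q_u(\gps^{\bold e})\ge0$, contradicting $Q_u<0$ on $W_{\bold e}$). That gives an odd map $S^{N-2}\to S^{\gm-2}$, which Borsuk–Ulam forbids when $\gm-2<N-2$, exactly the range $\gm\le N-1$. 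This one‑dimensional reduction via $w_1$ is the point you are missing; the Stiefel index claim is neither needed nor justified.
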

\begin{proof}  The proof is immediate if the Morse index of the solution satisfies $\mu (u) \leq 1$. \par
 In fact in this case for any direction $\bold e$ at least one amongst 
$ \gl _1^{e}$  and 
$ \gl _1^{-e}$ must be nonnegative, otherwise taking the  corresponding first eigenfunctions we would obtain a $2$-dimensional subspace of  $H_0^1(\gO \cup \Gamma _2 )$ where the quadratic form $ Q_u^{\bold e} $ is negative definite.  \\
So let us assume that   $2\leq j= \mu (u) \leq N-1 $.\par
Denote by $w_k$ the eigenfunctions of problem \eqref{ProbAutovNonLinSimm}, and 
for any direction $\bold e \in S^{N-2}$
let us consider the function 
$$ \gps ^{\bold e}(x)=
 \begin{cases} 
 \left (  \frac  {  ( \gf _1^{-e} \,, \, w _1 )_{\bold V} }  {  ( \gf _1^{e} \,, \, w _1 )_{\bold V} }  \right ) ^{\frac 12} \gf _1^{e} (x)
  \quad &\text{ if } x \in \gO (e)    \\
  -  \left (  \frac  {  ( \gf _1^{e} \,, \, w _1 )_{\bold V} }  {  ( \gf _1^{-e} \,, \, w _1 )_{\bold V} }  \right ) ^{\frac 12} \gf _1^{-e} (x)
  \quad &\text{ if } x \in \gO (-e)    
\end{cases}
$$
( let us recall that $\bold V = L^2(\Omega ) \times L^2 (\Gamma _2)$ and in the scalar product in the space $\bold V$ we consider the trivial extension to $\Omega $ of the eigenfunctions $\gf ^{\bold e}:=\gf _1^{e}$). \par
 The mapping $e \mapsto \gps _e $ is a continuous odd function from $S^{N-2}$ to $\huno $  and, by construction,  
 $ (\gps ^{\bold e} \,, \, w _1 )_{\bold V} =0$. \\
 Therefore the function $h: S^{N-2}\to \R ^{j-1} $ defined by
 $$ h(\bold e)= \left ( ( \gps ^{\bold e} \,, \, w _2 )_{\bold V},   \dots , ( \gps ^{\bold e} \,, \, w _j ){\bold V} \right )
 $$
 is an odd continuous mapping, and since $j-1 < N-1 $,  by the Borsuk-Ulam Theorem it must  have a zero.
 This means that there exists a direction $e\in S^{N-2}$ such that
   $\gps ^{\bold e} $ is orthogonal to all the eigenfunctions $w _1, \dots , w_j$. Since $ \mu (u)=j $  this implies that 
 $Q_u(\gps ^{\bold e} ; \gO )= B_u(\gps ^{\bold e}, \gps ^{\bold e})  \geq 0 $, which in turn implies that either $Q_u(\gf ^{\bold e} ; \gO (e) ) \geq 0 $ or 
 $Q_u(\gf ^{\bold {-e} } ; \gO (-e) ) \geq 0 $, i.e. either
 $\gl _1^{\bold e} $ or $\gl _1^{-\bold e} $ is nonnegative, so the assertion is proved.

\end{proof}

\begin{proof}[Proof of Theorem \ref{MainTheorem}] For simplicity of notations we first consider the case $N=3$. \par
 Let $\bold e \in S^{N-2}$ the direction found in Lemma \ref{esistcappaconautovalpositivo} such that  $\gl _1 ^{\bold e}  \geq 0 $ and $v= u_{\gs (\bold e)}$ the corresponding reflection of $u$, so that
 $(u-v)^{\pm} \in H_0^1 (\Omega (\bold e) \cup \Gamma _2 (\bold e)) $.
 Since $f$ and $g$ are strictly convex functions, if $u>v$ then $\frac {f(|x|,u)-f(|x|,v)} {u-v}< \frac {\partial f }{\partial u}(|x|,u)$ and
 $\frac {g(|x|,u)-g(|x|,v)} {u-v}< \frac {\partial g }{\partial u}(|x|,u)$. \par
  It follows, multiplying by $(u-v)^+$ the equations satisfied by $u$ and $v$ and subtracting,   that if $(u-v)^+ \not \equiv 0 $, then
\begin{multline} 0= \int _{\gO (\bold e )} |\nabla (u-v)^+|^2 \, dx  - \int _{\gO (\bold e )} \frac {f(|x|,u)-f(|x|,v)} {u-v} | (u-v)^+|^2 \, dx -\\
- \int _{\Gamma _2 (\bold e )} \frac {g(|x|,u)-g(|x|,v)} {u-v} | (u-v)^+|^2 \, dx' > \\ 
 \int _{\gO (\bold e )} |\nabla (u-v)^+|^2 \, dx  - \int _{\gO (\bold e )} \frac {\partial f }{\partial u}(|x|,u) {u-v} | (u-v)^+|^2 \, dx - \\
- \int _{\Gamma _2 (\bold e )} \frac {\partial g }{\partial u}(|x|,u) {u-v} | (u-v)^+|^2 \, dx'
\end{multline}
Since $(u-v)^{\pm} \in H_0^1 (\Omega (\bold e) \cup \Gamma _2 (\bold e)) $ and $\gl _1 ^{\bold e}  \geq 0 $ it follows that
$(u-v)^{\pm} \equiv 0 $, i.e. 
$$
u \leq v= u_{\gs (\bold e)} \text{ in }  \gO (\bold e )
$$ 
\par
There are now two cases.\par
\smallskip
CASE 1 : $u-v \not \equiv 0 $ in $\gO (\bold e )$. \par
\smallskip
If this is the case then $u< v $ in $\gO (\bold e )$ by the strong maximum principle, and to conclude that there is a direction $\bold {e}' \in S^{N-2}$ such that $u \equiv u_{\gs (e')}$ and 
that  $u$ is sectionally foliated Schwarz symmetric we perform a
  \emph{rotating plane} procedure as in \cite{BaWi}, \cite{PaWe}, which is the analogous for rotations of the Alexandrov-Serrin Moving Plane Method (\cite{Se}, \cite{GiNiNi}) as generalized in  \cite{BeNi}.  \par
More precisely  if $\bold e = \bold e_{\gth _0}= (\cos (\gth _0 ), \sin (\gth _0),0 )$ is a direction and 
$u< u_{\gs (\bold e_{\gth _0} )} $ in $\gO (\bold e_{\gth _0} )$, we consider the set 
$$
\gTh = \{ \gth > \gth _0 : u<  u_{\gs (\bold e _{\gth '} )} \text{ in } \gO (\bold e_{\gth  '} ) \; \forall \gth ' \in (\gth _0, \gth)\} 
$$ 
\par
We show now that
the set $\gTh $ is nonempty and contains all the angles $\gth $ greater than and close to $ \gth _0 $.\par
In fact we can take a compact $K \subset \Omega $ such that the measure $|\Omega \setminus K |$ is small, and where 
$m= \min (u_{\gs (\bold e_{\gth _0} )} -u)>0 $. By continuity if $\gth $ is close to $ \gth _0 $ we have that
$u_{\gs (\bold e_{\gth } )} -u \geq \frac m2 >0 $. On the other hand on the boundary of the set $\Omega \setminus K $ we have that 
$u_{\gs (\bold e_{\gth } )} -u \geq 0 $, and since $f$ and $g$ are locally Lipschitz, it follows that
$ u_{\gs (\bold e_{\gth } )} -u $ satisfies a linear problem as \eqref{maximum}.\par
By the weak maximum principle \ref{massimodebole} we get that $u \leq u_{\gs (\bold e_{\gth } )} $ in $\gO (\bold e_{\gth } )$.\par
So the set $\gTh $ is nonempty and contains all the angles $\gth $ greater than and close to $ \gth _0 $.\par
Moreover it is bounded above by $\gth _0 + \gp $, since considering the opposite direction the inequality between $u$ and the reflected function get reversed. \par
  If $\gth _1= \sup \gTh $, then $u \equiv  u_{\gs (\bold e _{\gth _1}) } $ in $\gO (\bold e_{\gth _1} )$.
  For if this is not the case by the strong maximum principle (since by continuity 
 $u \leq  u_{\gs (\bold e _{\gth _1}) } $ in $\gO (\bold e_{\gth _1} )$ ) we would have 
  $u < u_{\gs (\bold e _{\gth _1}) }$ in $\gO (\bold e_{\gth _1})$, and using again the maximum principle in small domains and the previous technique we would get $u<  u_{\gs (\bold e _{\gth } )}$ in $\gO (\bold e_{\gth } )$ for $\gth > \gth _1= \sup \gTh $ and close to $\gth _1$.
 \par
 Note moreover that by construction for every direction $\bold e _{\gth } \in S^{1}$ either $u \leq u_{\gs (\bold e_{\gth } )} $ or
 $u \geq u_{\gs (\bold e_{\gth } )} $, so that $u$ is foliated Schwarz symmetric. \par
 \smallskip
 CASE 2: $u  \equiv u_{\gs (e)} $ in $\gO (\bold e )$. \par
 \smallskip
Let us consider
the usual cylindrical coordinates $(r, \gth,x_3)$ and the function $v=\frac {\partial u} {\partial \gth } $ (with any value when $r=0$, e.g. $v( 0)=0$). \par
Since $u \in C^1 (\overline {\Omega})$ and satisfies \eqref{genprob} with $f$ and $g$ satisfying \eqref{Regolarita-f,g}, by standard elliptic regularity (see e.g. \cite{GiTr}) we have that  $u \in C^2(\Omega) \cap C^1 (\overline {\Omega})$. 
Therefore if $\gf \in H^{1}_0 (\gO \cup \Gamma _2)$ and we  test \eqref{genprob} with the function
$ \frac {\partial \gf} {\partial \gth} $, we obtain easily that $v=\frac {\partial u} {\partial \gth } $ weakly satisfies  the problem
\be \label{autovalorezero}
 \begin{cases}
  - \gD v = \frac {\partial f}{\partial u}(|x|,u)  v   &  \text{ in }  \gO (\bold e) \\
   v =0  & \text{ on }( \Gamma _1 (\bold e)  ) \cup T(\bold e) \\
    \frac {\partial v}{\partial \gn} = \frac {\partial g}{\partial u}(|x|,u)  v& \text{ on } \Gamma _2 (\bold e)
 \end{cases}
\ee

\par
\smallskip
There are now  two possibility: either $\frac {\partial u} {\partial \gth }\equiv 0 $, i.e. $u$ is sectionally radial, or  
$
v=\frac {\partial u} {\partial \gth }\not \equiv 0 $.
In this latter case $v$ is an eigenfunction, with corresponding zero eigenvalue,  of the eigenvalue problem \eqref{AutovalCappe} in  $\gO (\bold e )$.
Since by construction $\gl _1 ^{\bold e}  \geq 0 $, we have that $v$
 is the first eigenfunction, with corresponding zero eigenvalue, so that $v=\frac {\partial u} {\partial \gth }$
 is strictly positive (or strictly negative) in $\gO (\bold e )$.
\par
This implies easily that the hypothesis in Lemma \ref{condsuffFSS} holds (see \cite{PaWe} for  the case of the  Dirichlet problem in a ball or annulus), so that $u$ is sectionally foliated Schwarz symmetric . \par
\medskip
In the general case, i.e. if $N>3$, having found a direction $\bold e \in S^{N-2}$,  as
in Lemma \ref{esistcappaconautovalpositivo} such that  $\gl _1 (\Omega (\bold e)) \geq 0 $,
it suffices to apply the previous procedure for every choice of cylindrical coordinates $(r, \gth, x'',x_N)$, with $x'' \in \R^{N-3}$, with respect to a couple of direction $\bold e $, $\bold e' \in S^{N-2}$ which determines the plane of the variables $(r, \gth)$. \par
This implies again that the hypothesis of Lemma \ref{condsuffFSS} is satisfied, so that $u$ is sectionally foliated Schwarz symmetric.
\end{proof}

\medskip

\medskip

We end by remarking that exactly the same arguments in the previous proof show that
an analogous theorem holds for the Dirichlet problem in cylindrically symmetric domains:

\begin{theorem} \label{DirichletProblems}
 Let $\Omega $ be a bounded domain in $\R^N$, $N \geq 3$, which is cylindrically symmetric
 as in Definition \ref{DefDominiSimmetrici}.
 Let $u \in H^{1}_0 (\gO  ) \cap C^1 (\ov \Omega) $ be a weak solution of the problem
 \begin{equation} \label{Dirprob} 
\begin{cases} - \gD u =f(|x|,u) \quad &\text{in } \gO \\
u= 0 \quad & \text{on } \partial \Omega
\end{cases}
\end{equation}
 where 
$f$ has the form 
  $f(x,s)=f(|x'|,x_N,s)$,
  (i.e. $f(x',x_N,s)$  depends on $x'$ through the modulus $|x'|$)
  and $  f$ and $\frac {\partial f} {\partial s}$
are locally H\" older continuous functions in $ \overline {\Omega } \times \R $.
  \par
 Assume further that $f$ is strictly convex in the $s$- variable and that $u$ has Morse index $\gm (u) \leq N-1$.\par
 The $u$ is sectionally foliated Schwarz symmetric.
\end{theorem}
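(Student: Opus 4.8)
The plan is to retrace the proof of Theorem~\ref{MainTheorem} line by line, deleting every term and every hypothesis that refers to the flat boundary piece $\Gamma_2$ and to the boundary nonlinearity $g$. First I would set up the Dirichlet analogue of the spectral theory of Section~3: working in $H^1_0(\gO)$ with the equivalent scalar product $\int_{\gO}\nabla u\cdot\nabla\gf+\gL\int_{\gO}u\gf$ for $\gL$ large, and in the single Hilbert space $\ldue(\gO)$ in place of $\mathbf V=L^2(\gO)\times L^2(\Gamma_2)$, the Riesz representation argument produces a positive, compact, self-adjoint operator on $\ldue(\gO)$ whose inverse eigenvalues form a sequence $\gl_1\le\gl_2\le\cdots\to+\infty$ with eigenfunctions $w_j$ solving $-\gD w_j-\frac{\partial f}{\partial u}(|x'|,x_N,u)\,w_j=\gl_j w_j$ in $\gO$, $w_j=0$ on $\partial\gO$. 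The Rayleigh quotient $R(v)=Q_u(v;\gO)/(v,v)_{\ldue(\gO)}$ then has all the min--max and simplicity properties of Theorem~\ref{varformautov} (this is the classical Dirichlet situation, cf.\ \cite{Ev},\ \cite{Ke}), and, exactly as in Section~3, the Morse index of $u$ equals the number of negative such eigenvalues.

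Next I would introduce, for each $\mathbf e\in S^{N-2}$, the cap $\gO(\mathbf e)$, the reflection $\gs_{\mathbf e}$, the function $u_{\gs_{\mathbf e}}$, and the Dirichlet eigenvalue problem in $\gO(\mathbf e)$ with boundary condition on $(\Gamma_1(\mathbf e))\cup T(\mathbf e)$, whose first eigenvalue $\gl_1^{\mathbf e}$ is the minimum of $R^{\mathbf e}(v)=Q_u^{\mathbf e}(v)/(v,v)_{\ldue(\gO(\mathbf e))}$ over $v\in H^1_0(\gO(\mathbf e))\setminus\{0\}$, the first eigenfunction $\gf^{\mathbf e}$ being one-signed. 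The analogue of Lemma~\ref{esistcappaconautovalpositivo} holds with the same proof: if $\gm(u)\le1$ one argues directly that $\gl_1^{\mathbf e}\ge0$ or $\gl_1^{-\mathbf e}\ge0$ for every $\mathbf e$; if $2\le j=\gm(u)\le N-1$, one builds the odd continuous map $\mathbf e\mapsto\gps^{\mathbf e}\in H^1_0(\gO)$ obtained by gluing two rescaled copies of $\pm\gf^{\pm\mathbf e}$ normalised so that $(\gps^{\mathbf e},w_1)_{\ldue}=0$, and then the odd continuous map $h\colon S^{N-2}\to\R^{j-1}$, $h(\mathbf e)=((\gps^{\mathbf e},w_2)_{\ldue},\dots,(\gps^{\mathbf e},w_j)_{\ldue})$; since $j-1<N-1$, the Borsuk--Ulam theorem yields a direction $\mathbf e$ with $\gps^{\mathbf e}\perp w_1,\dots,w_j$, whence $Q_u(\gps^{\mathbf e};\gO)\ge0$ and therefore $\gl_1^{\mathbf e}\ge0$ or $\gl_1^{-\mathbf e}\ge0$.

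Fixing such a direction $\mathbf e$ with $\gl_1^{\mathbf e}\ge0$, I would note that, since $\gO$ is symmetric with respect to $T(\mathbf e)$ (by cylindrical symmetry) and $u=0$ on $\partial\gO$, one has $(u-u_{\gs(\mathbf e)})^{\pm}\in H^1_0(\gO(\mathbf e))$; testing the equations for $u$ and $v=u_{\gs(\mathbf e)}$ with $(u-v)^+$, subtracting, and using strict convexity of $f$ in $s$ together with $\gl_1^{\mathbf e}\ge0$, one gets $(u-v)^+\equiv0$, i.e.\ $u\le u_{\gs(\mathbf e)}$ in $\gO(\mathbf e)$. Then the same dichotomy as in Theorem~\ref{MainTheorem} applies. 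If $u\not\equiv u_{\gs(\mathbf e)}$, the strong maximum principle gives $u<u_{\gs(\mathbf e)}$, and a rotating-plane argument — using the weak maximum principle in small Dirichlet domains (the obvious Dirichlet analogue of Proposition~\ref{massimodebole}, in fact simpler, no trace term occurring) together with the strong maximum principle, as in \cite{BaWi},\ \cite{PaWe},\ \cite{BeNi},\ \cite{Se},\ \cite{GiNiNi} — shows that the supremum angle $\gth_1$ of the rotation interval gives $u\equiv u_{\gs(\mathbf e_{\gth_1})}$, and that every direction of $S^1$ satisfies one of the two inequalities. If instead $u\equiv u_{\gs(\mathbf e)}$, then by elliptic regularity $u\in C^2(\gO)\cap C^1(\ov\gO)$ (see \cite{GiTr}), and testing \eqref{Dirprob} with $\partial\gf/\partial\gth$ shows that $v=\partial u/\partial\gth$ weakly solves $-\gD v=\frac{\partial f}{\partial u}(|x|,u)\,v$ in $\gO(\mathbf e)$ with $v=0$ on $(\Gamma_1(\mathbf e))\cup T(\mathbf e)$; hence either $v\equiv0$ (so $u$ is sectionally radial) or $v$ is an eigenfunction with eigenvalue $0$, which by $\gl_1^{\mathbf e}\ge0$ must be the first one, so $v$ is one-signed in $\gO(\mathbf e)$. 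In either case the hypothesis of Lemma~\ref{condsuffFSS} holds (cf.\ \cite{Pa},\ \cite{PaWe}), and $u$ is sectionally foliated Schwarz symmetric. For $N>3$ the argument is run in every plane of variables $(r,\gth)$ determined by a pair of directions in $S^{N-2}$, exactly as before.

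The only point requiring genuine verification, rather than mechanical deletion, is that no step of the proof of Theorem~\ref{MainTheorem} used the boundary piece $\Gamma_2$ in an essential way; concretely, one must record the Dirichlet version of the weak maximum principle in domains of small measure, but this follows at once from the pure Poincar\'e inequality in $H^1_0(\gO)$ and is entirely standard. I therefore expect the whole adaptation to be routine.
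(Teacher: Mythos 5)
Your proposal is correct and follows exactly the route the paper intends: the paper's ``proof'' of Theorem~\ref{DirichletProblems} is the one-line remark that ``exactly the same arguments in the previous proof'' apply, and you have spelled out faithfully what that deletion amounts to --- working in $L^2(\gO)$ rather than $\mathbf V$, dropping the trace terms in the Rayleigh quotient and in the Poincar\'e/weak-maximum-principle arguments, and noting that $(u-u_{\gs(\mathbf e)})^{\pm}\in H^1_0(\gO(\mathbf e))$ because the cylindrically symmetric $\gO$ is invariant under reflection through $T(\mathbf e)$ for $\mathbf e\in S^{N-2}$. The only non-mechanical point you flag (the Dirichlet version of Proposition~\ref{massimodebole}) is indeed the only one, and your observation that it follows from the plain Poincar\'e inequality in $H^1_0$ is correct.
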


\end{document}